\newtheorem{thm}[equation]{Theorem}
\newtheorem{cor}[equation]{Corollary}
\newtheorem{prop}[equation]{Proposition}
\newtheorem{lemma}[equation]{Lemma}
\theoremstyle{definition}
\newtheorem{defn}[equation]{Definition}
\newtheorem{remark}[equation]{Remark}
\newtheorem{exam}[equation]{Example}
\numberwithin{equation}{section}
\newcommand{\FF}{\mathbb{F}}  
\newcommand{\ZZ}{\mathbb{Z}}
\newcommand{\Z}{\mathsf{Z}}
\newcommand\chara{\mathsf{char}}
\newcommand{\pb}[1]{\left\{ #1\right\}}
\newcommand{\seq}[1]{\left( #1\right)}
\newcommand\degg{\, \mathsf{deg} \,}
\newcommand\DEg{\operatorname{\underline{\mathsf{deg}}}}
\newcommand{\hqfg}[1][f,g]{\mathcal{H}_q(#1)}
\newcommand{\hqqfg}[1][f',g']{\mathcal{H}_{q'}(#1)}
\newcommand{\hh}{\mathcal{H}}
\newcommand{\autt}[1]{\mathsf{Aut}(#1)}
\title{Structure and isomorphisms of quantum generalized Heisenberg algebras}
\author{Samuel A.\ Lopes\thanks{Partially supported by CMUP, which is financed by national funds through FCT---Funda\c c\~ao para a Ci\^encia e a Tecnologia, I.P., under the project with reference UIDB/00144/2020.}\ }
\author{Farrokh Razavinia\thanks{Supported by FCT, through the grant PD/BD/142959/2018, under POCH funds, co-financed by the European Social Fund and Portuguese National Funds from MEC.}}
\affil{CMUP, Departamento de Matem\'atica, Faculdade de Ci\^encias, Universidade do Porto, Rua do Campo Alegre s/n, 4169--007 Porto, Portugal.}
\date{}
\begin{document}

\maketitle

\begin{abstract}
In~\cite{LR20arXiv} we introduced a new class of algebras, which we named \textit{quantum generalized Heisenberg algebras} and which depend on a parameter $q$ and two polynomials $f,g$. We have shown that this class includes all generalized Heisenberg algebras (as defined in \cite{CRM01} and \cite{LZ15}) as well as generalized down-up algebras (as defined in \cite{BR98} and \cite{CS04}), but the parameters of freedom we allow give rise to many algebras which are in neither one of these two classes (if $q\neq 1$ and $\degg f>1$). Having classified their finite-dimensional irreducible representations in~\cite{LR20arXiv}, in this paper we turn to their classification by isomorphism, the description of their automorphism groups and the study of ring-theoretical properties like Gelfand-Kirillov dimension and being Noetherian. 
\newline\newline
\textbf{MSC Numbers (2020)}: Primary 16W20; Secondary 17B81, 16S80, 16T20.
%16D60  Simple and semisimple modules, primitive rings and ideals
%16Gxx Representation theory of associative rings and algebras
%13N15 %(Commutative algebra, differential algebra) Derivations
%16S10 % (Rings and algebras arising under various constructions) Rings determined by universal properties (free algebras, coproducts, adjunction of inverses, etc.)
%16S32, %Rings of differential operators
%16S80  	Deformations of associative rings
%16T15 %(Hopf algebras, quantum groups and related topics) Coalgebras and comodules; corings
%16T20 Ring-theoretic aspects of quantum groups
%16W20 % (Associative rings and algebras, Rings and algebras with additional structure) Automorphisms and endomorphisms
%16W25, %Derivations, actions of Lie algebras
%17B01 %(Nonassociative rings and algebras, Lie algebras and Lie superalgebras) Identities, free Lie (super)algebras
%17B37  	Quantum groups (quantized enveloping algebras) and related deformations
%17B63 %(Nonassociative rings and algebras, Lie algebras and Lie superalgebras) Poisson algebras
%17B81 Lie algebras and Lie superalgebras: Applications to physics
%81R10 Infinite-dimensional groups and algebras motivated by physics, including Virasoro, Kac-Moody, W-algebras and other current algebras and their representations
 \hfill \newline
\textbf{Keywords}: down-up algebra; generalized Heisenberg algebra; Gelfand-Kirillov dimension; noetherian; isomorphism; automorphism.
\end{abstract}

\section{Introduction}\label{S:intro}

This paper continues the study of a new class of algebras introduced in \cite{LR20arXiv} and named \textit{quantum generalized Heisenberg algebras} (qGHA, for short), as they can be seen simultaneously as deformations and as generalizations of the generalized Heisenberg algebras appearing in \cite{CRM01} and profusely studied thenceforth in the physics literature (see e.g.~\cite{CRMR13}, \cite{BBH11}, \cite{BCG18} and the references therein). In the mathematics literature, generalized Heisenberg algebras were studied mainly in \cite{LZ15}, \cite{LMZ15} and \cite{sL17}. For an overview of their relevance in mathematical physics see the introductory section in \cite{LZ15}.

Our main motivation for introducing a generalization of this class, besides providing a broader framework for the investigation of the possible underlying physical systems, comes from the observation in \cite{sL17} that the classes of generalized Heisenberg algebras and (generalized) down-up algebras intersect (see the seminal paper \cite{BR98} on down-up algebras and also \cite{CS04}), although neither one contains the other. The other interesting feature of our study comes from the fact that quantum generalized Heisenberg algebras are generically non-Noetherian although they resemble and are related to deformations of enveloping algebras of Lie algebras.

\begin{defn}
Let $\FF$ be an arbitrary field and fix $q\in\FF$ and $f,g\in\FF[h]$. The quantum generalized Heisenberg algebra (qGHA, for short), denoted by $\hqfg$, is the $\FF$-algebra generated by $x$, $y$ and $h$, with defining relations:
\begin{equation}\label{E:intro:def:qgha}
hx=xf(h), \quad yh=f(h)y, \quad yx-qxy=g(h).
\end{equation} 
\end{defn}

The main results in this paper are Proposition~\ref{P:noetherian}, which characterizes the Noetherian quantum generalized Heisenberg algebras (compare \cite{KMP99} and \cite{CS04} for (generalized) down-up algebras and \cite{sL17} for the generalized Heisenberg algebras $\hh_1(f, f-h)$), Theorem~\ref{T:iso}, which classifies the algebras $\hqfg$ by isomorphism type (compare \cite{CM00} for down-up algebras and \cite{LZ15} which solves this problem for the generalized Heisenberg algebras $\hh_1(f, f-h)$) and Theorems~\ref{Thm2} and~\ref{Thm3}, which describe the structure of the automorphism group of $\hqfg$ (compare~\cite{CL09} for generalized down-up algebras and \cite{sL17} for the generalized Heisenberg algebras $\hh_1(f, f-h)$).

\subsection{Examples of quantum generalized Heisenberg algebras}\label{SS:intro:eg}

The generalized Heisenberg algebras from \cite{LZ15} are precisely the qGHA with $q=1$ and $g=f(h)-h$, i.e.\ the algebras of the form $\hh_1(f, f-h)$. Let us consider more general examples.

For parameters $\alpha, \beta, \gamma\in\FF$, the down-up algebra $A(\alpha, \beta, \gamma)$ was defined by Benkart and Roby in \cite{BR98} as the unital associative algebra with generators $d$ and $u$ and defining relations:
\begin{equation*}
d^2 u = \alpha dud + \beta ud^2 + \gamma d\quad \mbox{and}\quad
du^2 = \alpha udu + \beta u^2 d + \gamma u. 
\end{equation*}
In \cite{CS04}, Cassidy and Shelton generalized this construction and introduced the generalized down-up algebra $L(v,r,s,\gamma)$ as the unital associative algebra generated by $d$, $u$ and $h$ with defining relations
\begin{equation*}
dh-rhd +\gamma d =0, \quad \quad hu-ruh +\gamma u=0 \quad \mbox{and}\quad du-sud +v(h)=0,
\end{equation*}
where $r, s, \gamma\in\FF$ and $v\in\FF[h]$.
Generalized down-up algebras include all down-up algebras, as long as the polynomial $h^2-\alpha h-\beta$ has roots in $\FF$.  Moreover, the following are generalized down-up algebras: the algebras \textit{similar to the enveloping algebra of $\mathfrak{sl}_2$} defined by Smith \cite{spS90}, Le Bruyn conformal $\mathfrak{sl}_2$ enveloping algebras \cite{lLB95}, and Rueda's algebras \textit{similar to the enveloping algebra of $\mathfrak{sl}_2$} \cite{sR02}.

We have observed in~\cite{LR20arXiv} that the class of generalized down-up algebras coincides with the class of quantum generalized Heisenberg algebras $\hqfg$ such that $\degg f\leq 1$.

\begin{prop}[{\cite{LR20arXiv}}]\label{P:intro:gduaRqgha}
Let $r, s, \gamma\in\FF$ and $v\in\FF[h]$. Then the generalized down-up algebra $L(v,r,s,\gamma)$ is isomorphic to the quantum generalized Heisenberg algebra $\hh_s(rh-\gamma, -v)$. In particular, the down-up algebra $A(\alpha, \beta, \gamma)$ is isomorphic to the quantum generalized Heisenberg algebra $\hh_s(rh+\gamma, h)$, where $\alpha=r+s$ and $\beta=-rs$.

Conversely, any quantum generalized Heisenberg algebra $\hqfg$ such that $f(h)=ah+b$, with $a, b\in\FF$, is a generalized down-up algebra of the form $L(-g,a,q,-b)$.
\end{prop}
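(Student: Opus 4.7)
The plan is to prove the isomorphism $L(v,r,s,\gamma)\cong \hh_s(rh-\gamma,-v)$ by constructing mutually inverse algebra homomorphisms that match the natural generators, and then deduce the remaining statements as immediate specializations.

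First, I would define $\phi \colon \hh_s(rh-\gamma,-v)\to L(v,r,s,\gamma)$ on generators by $\phi(h)=h$, $\phi(x)=u$, $\phi(y)=d$, and check that each defining relation of $\hh_s(rh-\gamma,-v)$ is annihilated. Taking $f(h)=rh-\gamma$ and $g(h)=-v(h)$ in~\eqref{E:intro:def:qgha}, this amounts to verifying:
\begin{align*}
hu &= u(rh-\gamma) = ruh - \gamma u, \\
dh &= (rh-\gamma)d = rhd - \gamma d, \\
du - sud &= -v(h),
\end{align*}
which are precisely the three defining relations of $L(v,r,s,\gamma)$ after rearrangement. Hence $\phi$ is a well-defined algebra homomorphism. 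Symmetrically, I would define $\psi \colon L(v,r,s,\gamma)\to \hh_s(rh-\gamma,-v)$ by $\psi(h)=h$, $\psi(u)=x$, $\psi(d)=y$, whose well-definedness follows from the exact same three identities read in the opposite direction. Since $\phi\circ\psi$ and $\psi\circ\phi$ fix the generators, they are the identity maps, and $\phi$ is an isomorphism.

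For the converse statement, suppose $\hqfg$ has $f(h)=ah+b$. Setting $r=a$, $\gamma=-b$, $s=q$, and $v=-g$, the polynomial $f$ becomes $rh-\gamma$ and $g$ becomes $-v$, so $\hqfg = \hh_s(rh-\gamma,-v)\cong L(v,r,s,\gamma) = L(-g, a, q, -b)$ by the first part. Finally, the assertion about ordinary down-up algebras is a direct specialization: the standard realization of $A(\alpha,\beta,\gamma)$ as the generalized down-up algebra $L(h, r, s, -\gamma)$ (valid once we choose $r,s\in\FF$ with $\alpha=r+s$, $\beta=-rs$) combined with the already-proved isomorphism yields $A(\alpha,\beta,\gamma)\cong \hh_s(rh+\gamma, h)$, as claimed.

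The only real obstacle is bookkeeping the several sign/parameter dualizations between the two presentations; once the correspondence $(q,f,g)\leftrightarrow(s,rh-\gamma,-v)$ is fixed and the generator identifications $x\leftrightarrow u$, $y\leftrightarrow d$, $h\leftrightarrow h$ are made, all three relations transform into one another termwise, so no deeper argument (PBW bases, filtration comparisons, etc.) is needed beyond the direct verification above.
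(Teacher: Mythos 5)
Your construction is correct and is the only reasonable one: the paper itself states this proposition without proof (it is quoted from \cite{LR20arXiv}), and the expected argument is exactly your direct matching of generators $x\leftrightarrow u$, $y\leftrightarrow d$, $h\leftrightarrow h$, under which the three defining relations of $\hh_s(rh-\gamma,-v)$ and of $L(v,r,s,\gamma)$ correspond termwise; the converse is, as you say, pure re-parametrization. The one slip is in the down-up specialization: with $h=du-sud$ the relation $du-sud+v(h)=0$ forces $v=-h$ and the first two relations come out as $dh-rhd-\gamma d=0$, so the correct realization is $A(\alpha,\beta,\gamma)\cong L(-h,r,s,-\gamma)$, which under your first part gives precisely $\hh_s\bigl(rh-(-\gamma),-(-h)\bigr)=\hh_s(rh+\gamma,h)$. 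Your choice $L(h,r,s,-\gamma)$ instead yields $\hh_s(rh+\gamma,-h)$ and is isomorphic to $A(\alpha,\beta,-\gamma)$ rather than $A(\alpha,\beta,\gamma)$; the conclusion is rescued only because $A(\alpha,\beta,\gamma)\cong A(\alpha,\beta,-\gamma)$ (rescale $u\mapsto -u$) and $\hqfg[f,g]\cong\hqfg[f,-g]$ (the map $\rho_{\lambda,\mu}$ with $\lambda\mu=-1$), so you should either fix the sign of $v$ or explicitly invoke one of these rescalings. You also correctly note the hypothesis, implicit in the statement, that $r,s\in\FF$ exist, i.e.\ that $t^2-\alpha t-\beta$ splits over $\FF$.
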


As a reciprocal to the above we shall see in Corollary~\ref{gduaimpdf1} that if a quantum generalized Heisenberg algebra $\hqfg$ is isomorphic to a generalized down-up algebra, then necessarily $\degg f\leq1$.

\subsection{Organization of the paper}\label{SS:intro:org}

In Section~\ref{S:basic} we review the basic properties of qGHA. By using an appropriate filtration and results on Gelfand-Kirillov dimension, we are able to prove in Corollary~\ref{gduaimpdf1} that if $\degg f>1$ then $\hqfg$ is not isomorphic to a generalized down-up algebra. This divides the class of qGHA into two natural subclasses: if $\degg f\leq1$ we get all generalized down-up algebras, which have been extensively studied from many points of view; if $\degg f>1$ we get algebras which are non-Noetherian domains (as long as $q\neq 0$) and which, in spite of appearing to be of a similar nature, have not been yet studied in depth, as far as we know.

In Section~\ref{S:noetherian} we characterize the Noetherian quantum generalized Heisenberg algebras. While it is well known that for generalized down-up algebras being Noetherian is equivalent to being a domain (\cite{KMP99}, \cite{CS04}), we see that within our wider class of algebras this correspondence no longer holds as for $q\neq 0$ and $\degg f>1$ the algebra $\hqfg$ will be a non-Noetherian domain.

The isomorphism problem for quantum generalized Heisenberg algebras is tackled in Section~\ref{S:class} and it will be seen that the isomorphism relation can be phrased in very concrete geometric terms, very much like in~\cite{BJ01}. It will follow in particular that, in case $q\neq 0$ and $\degg f>1$, the parameter $q$, as well as the integers $\degg f$ and $\degg g$, are invariant under isomorphism, showing that qGHA are indeed a vast generalization of generalized Heisenberg algebras and generalized down-up algebras. 

In terms of automorphism groups, which we study in Section~\ref{S:automorphisms}, an interesting phenomenon occurs. Although, as long as either $\chara(\FF)=0$ or $\chara(\FF)>\degg f$, the automorphism group of a quantum generalized Heisenberg algebra $\hqfg$ with $q\neq 0$ and $\degg f>1$ is abelian and does not depend on the parameter $q$ (although its isomorphism class does), if we allow $0<\chara(\FF)\leq \degg f$ then we can obtain non-abelian automorphism groups.

\subsection{Conventions and notation}\label{SS:intro:notation}

Throughout the paper, $\FF$ will denote an arbitrary field, with multiplicative group denoted by $\FF^*$. The integers, nonnegative integers and positive integers will be denoted by $\ZZ$, $\ZZ_{\geq 0}$ and $\ZZ_{>0}$, respectively. Given a set $E$, the identity map on $E$ will be denoted by $1_E$. 

The relation $hx=xf(h)$ implies that $hx^2=x^2 f(f(h))$ and similarly for higher powers of $x$ and $y$. To deal with this type of commutation we introduce the unital algebra endomorphism $\sigma:\FF[h]\longrightarrow \FF[h]$ which maps $h$ to $f(h)$. Then $f(h)=\sigma(h)$, $f(f(h))=\sigma^2(h)$, etc. Thus, for example, $hx^k=x^k \sigma^k(h)$, for all $k\geq 0$.

\section{Basic properties and first results on quantum generalized Heisenberg algebras}\label{S:basic}

For the reader's convenience we collect in this short section some basic results and properties of quantum generalized Heisenberg algebras. 

\subsection{Some (anti)-isomorphisms}\label{SS:basic:autos}

Whenever possible, we will exploit the symmetry between $x$ and $y$ in the defining relations \eqref{E:intro:def:qgha}. This is materialized by the anti-automorphism of order 2, $\iota:\hqfg\longrightarrow\hqfg$, fixing $h$ and interchanging $x$ and $y$. Applying $\iota$ to an equation in $\hqfg$ will reverse the roles of $x$ and $y$ at the cost of inverting the order of multiplication. In this way we can show the equivalence between right and left versions of properties like being Noetherian or primitive.

The isomorphism below will be useful, e.g.\ in Proposition~\ref{P:noetherian}, to adjust the independent term of $f$ in $\hqfg$.

\begin{lemma}\label{L:basic:auto1}
For any $\alpha\in\FF$ we have the isomorphism $\hqfg\simeq\hqfg[f(h-\alpha)+\alpha, g(h-\alpha)]$.
\end{lemma}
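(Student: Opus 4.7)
The plan is to exhibit an explicit isomorphism by a translation in the variable $h$, leaving $x$ and $y$ untouched. Concretely, set $f'(h) = f(h-\alpha)+\alpha$ and $g'(h) = g(h-\alpha)$, and define a map
\[
\phi: \hqfg[f',g'] \longrightarrow \hqfg, \q \phi(x) = x,\ \phi(y) = y,\ \phi(h) = h + \alpha.
\]
To prove $\phi$ is a well-defined algebra homomorphism, I would verify the three defining relations of $\hqfg[f',g']$ directly. For the first, $\phi(h)\phi(x) = (h+\alpha)x = xf(h) + \alpha x = x\bigl(f(h)+\alpha\bigr) = x\, f'(h+\alpha) = \phi(x)\phi(f'(h))$; the second relation follows symmetrically from $yh = f(h)y$; and the third is immediate since $\phi(y)\phi(x) - q\phi(x)\phi(y) = yx - qxy = g(h) = g'(h+\alpha) = \phi(g'(h))$.

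To produce the inverse, I would apply the very same construction with $\alpha$ replaced by $-\alpha$: note that if $f'(h) = f(h-\alpha)+\alpha$ then $f'(h+\alpha) - \alpha = f(h)$, so $f(h) = f'(h-(-\alpha)) + (-\alpha)$ and similarly $g(h) = g'(h-(-\alpha))$. Thus the analogous map $\psi: \hqfg \to \hqfg[f',g']$, sending $x \mapsto x$, $y \mapsto y$, $h \mapsto h - \alpha$, is a homomorphism by the same verification. On the three generators $\phi\circ\psi$ and $\psi\circ\phi$ are clearly the identity, so both compositions are the identity on the whole algebra.

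There is no real obstacle here; the one thing that needs to be checked with a little care is the compatibility between the shift $h \mapsto h+\alpha$ and the composition $f(h-\alpha)+\alpha$ appearing in the statement, which is exactly what makes the first relation go through. Once that algebraic identity is in place, the rest is a mechanical verification on generators and relations.
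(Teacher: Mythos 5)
Your proposal is correct and takes essentially the same route as the paper: an explicit translation $h\mapsto h\pm\alpha$ fixing $x$ and $y$, with the verification on the defining relations and the inverse given by the same construction with $-\alpha$. The paper phrases this as an automorphism of the free algebra $\FF\langle h,x,y\rangle$ carrying one defining ideal to the other, which packages the well-definedness and invertibility in one step, but the underlying computation is identical to yours.
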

\begin{proof}
Let $\phi:\FF\langle h, x, y\rangle\longrightarrow\FF\langle h, x, y\rangle$ be the automorphism of the fee algebra on $h, x, y$ defined on the generators by $\phi(x)=x$, $\phi(y)=y$ and $\phi(h)=h-\alpha$. Then $\phi(hx-xf(h))=(h-\alpha)x-xf(h-\alpha)=hx-x(f(h-\alpha)+\alpha)$; similarly, $\phi(yh-f(h)y)=yh-(f(h-\alpha)+\alpha)y$ and $\phi(yx-qxy-g(h))=yx-qxy-g(h-\alpha)$. Hence, $\phi$ maps the defining ideal of $\hqfg$ to the defining ideal of $\hqfg[f(h-\alpha)+\alpha, g(h-\alpha)]$ and induces the claimed isomorphism.
\end{proof}

\subsection{Basic structure and $\ZZ$-grading}\label{SS:basic:grading}

There is a natural $\ZZ$-grading
obtained by setting $x$ in degree $1$, $h$ in degree $0$ and $y$ in degree $-1$. It gives the decomposition
\begin{equation}\label{E:basic:weight_space_dec}
\hqfg=\bigoplus_{k\in\ZZ} \hqfg_k,
\end{equation}
where $\hqfg_k$ denotes the vector subspace of homogeneous elements of degree $k$.

\begin{prop}[{\cite{LR20arXiv}}]\label{P:basic}
Let $\hqfg$ be a quantum generalized Heisenberg algebra. Then the following hold.
\begin{enumerate}[label={(\alph*)}]
\item For any basis $\pb{v_j}_{j\in \ZZ_{\geq 0}}$ of $\FF[h]$, the set $\pb{x^i v_j y^k \mid i, j, k\in \ZZ_{\geq 0}}$ is a basis of $\hqfg$.
\item $\hqfg$ is a domain if and only if $q\neq 0$ and  $\degg f\geq 1$.
\item Concerning the decomposition \eqref{E:basic:weight_space_dec} we have, for $k\geq 0$,
\begin{equation*}
 \hqfg_0=\bigoplus_{i\geq 0}x^i \FF[h]y^i, \quad  \hqfg_k=x^k\, \hqfg_0 \quad \mbox{and}\quad \hqfg_{-k}=\hqfg_0\, y^{k}.
\end{equation*}
\item Suppose that $\degg f>1$ and denote the center of $\hqfg$ by $\Z(\hqfg)$. Then:
\begin{enumerate}[label={(\roman*)}]
\item $\hqfg_0$ is the centralizer of $h$.
\item If $q$ is not a root of unity, then $\Z(\hqfg)=\FF$.
\item If $q$ is a primitive $\ell$-th root of unity and $g(h)=\sigma(a)-qa$ for some $a\in\FF[h]$, then $\Z(\hqfg)=\FF[Z^\ell]$, where $Z=q(xy-a)$.
\end{enumerate}
\end{enumerate} 
\end{prop}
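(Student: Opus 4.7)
The plan is to handle parts (a) and (c) first, then (b), then build up to (d)(iii), which is the heart of the statement. For (a), I would apply Bergman's Diamond Lemma to the rewriting system $hx\to xf(h)$, $yh\to f(h)y$, $yx\to qxy+g(h)$, using a degree-lexicographic order with $y>h>x$. The unique essential overlap is the word $yhx$, and both reduction paths collapse to the same normal form because $f(h)g(h)=g(h)f(h)$ in the commutative ring $\FF[h]$. This establishes the PBW basis $\{x^ih^jy^k\}_{i,j,k\in\ZZ_{\geq 0}}$; replacing $\{h^j\}_j$ by any $\FF$-basis $\{v_j\}_j$ of $\FF[h]$ proves (a). Part (c) is then immediate from (a), since the $\ZZ$-grading assigns $x^iv_jy^k$ to degree $i-k$.

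For (b), I would equip $\hqfg$ with the filtration by total degree in $x$ and $y$. The associated graded algebra has defining relations $hx=xf(h)$, $yh=f(h)y$, $yx=qxy$ (the $g(h)$-term drops, as it has lower filtration degree), presenting an iterated skew polynomial extension of $\FF[h]$; this is a domain precisely when $q\neq 0$ and the endomorphism $\sigma:h\mapsto f(h)$ of $\FF[h]$ is injective, i.e.\ when $\degg f\geq 1$. Consequently $\hqfg$ is itself a domain in that case. For the converse, if $\degg f=0$ then $f=c\in\FF$ and $(h-c)x=0$ is a nonzero pair of zero divisors by PBW; if $q=0$, one exhibits zero divisors directly by computing $y\cdot(xy)^n=g(h)^n\,y$ and, choosing $n$ and $p\in\FF[h]$ with $p(f(h))=g(h)^n$ whenever possible, so that $y((xy)^n-p(h))=0$.

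For (d)(i), one inclusion is direct: $h$ commutes with each generator $x^ip(h)y^i\in\hqfg_0$, since $h\cdot x^ip(h)y^i=x^i\sigma^i(h)p(h)y^i=x^ip(h)\sigma^i(h)y^i=x^ip(h)y^i\cdot h$. For the converse, decompose $z=\sum z_k$ by the grading; since $[h,-]$ preserves each homogeneous component, each $z_k=x^k\sum_i x^ip_i(h)y^i$ (for $k\geq 0$) must satisfy $[h,z_k]=0$, which expands to $p_i(h)(\sigma^{k+i}(h)-\sigma^i(h))=0$ in $\FF[h]$. The key observation is that $\degg\sigma^j(h)=(\degg f)^j$ is strictly increasing in $j$ when $\degg f>1$, forcing $p_i=0$ whenever $k\neq 0$. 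Part (d)(ii) then follows: any central $z$ lies in $\hqfg_0$ by (i); writing $z=\sum_{i=0}^N x^ip_i(h)y^i$ with $p_N\neq 0$ and using $y^ix=q^ixy^i+(\text{terms of strictly lower degree in }y)$, comparison of the top $x$-degree coefficient of $[x,z]=0$ yields $p_N(h)=q^Np_N(f(h))$. The same degree inequality forces $p_N$ constant and $q^N=1$; since $q$ is not a root of unity, $N=0$, after which $[y,p_0]=0$ reduces $p_0$ to a scalar.

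For (d)(iii), the same argument shows that the top $x$-degree $N$ of any central $z$ must be divisible by $\ell$. The key computation is to verify, using $g(h)=\sigma(a)-qa$, that $Zx=qxZ$ and $yZ=qZy$ (while $hZ=Zh$ by (i)); this rests on the identity $g(h)-\sigma(a)=-qa$, which is precisely what makes the extra terms in the expansions of $Zx$ and $yZ$ collapse. Consequently $Z^\ell$ commutes with $x$, $y$ and $h$, and $\FF[Z^\ell]\subseteq\Z(\hqfg)$. For the reverse inclusion I would compute the leading term of $Z^n$, using that $xy$ and $a$ commute in $\hqfg$ (since $xy\cdot a=x\sigma(a)y=a\cdot xy$), so that $(xy-a)^n$ admits an ordinary binomial expansion; this gives $Z^n=q^{n(n+1)/2}x^ny^n+(\text{lower in }x)$. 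Writing $N=m\ell$, the element $Z^N=(Z^\ell)^m\in\FF[Z^\ell]$ is central with matching top $x$-degree, and subtracting an appropriate scalar multiple from $z$ strictly lowers its $x$-degree; iterating reduces to a scalar. The main obstacle I anticipate is (d)(iii): the hypothesis $g=\sigma(a)-qa$ must be applied with surgical precision for the cancellation in $Zx-qxZ$, and tracking leading PBW coefficients through the inductive reduction requires care.
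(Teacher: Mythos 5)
The paper never actually proves Proposition~\ref{P:basic} --- it is imported from \cite{LR20arXiv} --- so there is no in-text argument to compare yours against; I can only assess your proposal on its own terms. Most of it is sound: the overlap $yhx$ is indeed the only ambiguity in the Diamond Lemma setup and it resolves because $\FF[h]$ is commutative; (c) is immediate from (a); the centralizer computation in (d)(i) via the degrees $(\degg f)^{j}$, the leading-coefficient identity $p_N=q^Np_N(f(h))$ in (d)(ii), and in (d)(iii) the relations $Zx=qxZ$, $yZ=qZy$ (resting exactly on $g-\sigma(a)=-qa$), the leading term $q^{n(n+1)/2}x^ny^n$ of $Z^n$, and the descent on the top $x$-degree are all correct. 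One technical caveat on (a): a plain degree-lexicographic order is \emph{not} compatible with the reduction $hx\to xf(h)$ when $\degg f\geq 2$, since the right-hand side has larger total degree; you need a more carefully built semigroup partial order (e.g.\ one weighting each occurrence of $h$ by the number of $x$'s to its right and $y$'s to its left), or else the standard alternative of realizing the spanning set as a basis via an explicit faithful representation.

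The genuine gap is in the converse direction of (b) when $q=0$, $\degg f\geq1$ and $g\neq0$. Your candidate zero divisor $y\bigl((xy)^n-p(h)\bigr)=\bigl(g^n-p(f(h))\bigr)y$ vanishes only if $g^n\in\FF[f(h)]$ for some $n$, and this can fail for \emph{every} $n$: take $f=h^2$ and $g=h+1$ over $\mathbb{Q}$, where $(h+1)^n$ always has a nonzero coefficient in odd degree. Your hedge ``whenever possible'' leaves all such cases unproved, so the statement ``$q=0$ implies not a domain'' is not established. The repair is short: since $\degg f\geq1$, $h$ is a root of the monic polynomial $a_n^{-1}(f(T)-f(h))\in\FF[f(h)][T]$, so $\FF[h]$ is a finite $\FF[f(h)]$-module and $g$ is integral over $\FF[f(h)]$; writing $g^m+c_{m-1}g^{m-1}+\dots+c_0=0$ with $c_i\in\FF[f(h)]$ and $c_0\neq0$ (cancel powers of $g$ otherwise), the element $p=g^{m-1}+\dots+c_1$ is nonzero and satisfies $gp=-c_0=\sigma(r)$ for some $r\in\FF[h]$, whence $y\bigl(xp(h)y-r(h)\bigr)=(gp-\sigma(r))y=0$ exhibits zero divisors. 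With this substitution (and the ordering fix above) your proof goes through.
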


\subsection{A (non-standard) $\ZZ^2$-filtration}\label{SS:basic:filtration}

Let $0\neq a\in\hqfg$. Then, by Proposition~\ref{P:basic}, there exist unique elements $p_{ij}(h)\in\FF[h]$ so that
\begin{equation*}
a=\sum_{i, j\geq 0} x^i p_{ij}(h) y^j.
\end{equation*}
We consider the lexicographical order on the (finite) set $\operatorname{\sf supp}(a)=\pb{(i, j)\in\ZZ^2\mid p_{ij}(h)\neq 0}$
and define 
\begin{equation}\label{E:lexdeg}
\DEg a=\max\operatorname{\sf supp}(a),
\end{equation}
with the convention that $\DEg 0=(-\infty, -\infty)$. Then set 
\begin{align*}
\mathcal F_{(\alpha, \beta)}&=\pb{a\in\hqfg\mid \DEg a\leq (\alpha, \beta)} \quad \text{and}\quad
\mathcal F^-_{(\alpha, \beta)}&=\pb{a\in\hqfg\mid \DEg a< (\alpha, \beta)},
\end{align*}
for all $\alpha, \beta\geq 0$. 

\begin{lemma}\label{L:basic:filtration}
Let $\alpha, \tilde\alpha, \beta, \tilde\beta\in\ZZ_{\geq 0}$ and $p, \tilde p\in\FF[h]$. Then:
\begin{enumerate}[label=(\alph*)]
\item $\displaystyle(x^\alpha py^\beta)(x^{\tilde\alpha} \tilde py^{\tilde\beta})-q^{\tilde\alpha \beta}x^{\alpha+\tilde\alpha} \sigma^{\tilde\alpha}(p)\sigma^\beta(\tilde p)y^{\beta+\tilde\beta}\in{\mathcal F}^-_{(\alpha+\tilde\alpha,\beta+\tilde\beta)}$.

\item $\mathcal F_{(\alpha, \beta)}\mathcal F_{(\tilde\alpha, \tilde\beta)}\subseteq{\mathcal F}_{(\alpha+\tilde\alpha,\beta+\tilde\beta)}$ and $\seq{\mathcal F_{(i, j)}}_{i,j\geq 0}$ defines an increasing filtration of $\hqfg$. 

\item If $q\neq 0$ and $\degg f\geq 1$ then $\DEg ab=\DEg a+\DEg b$, for all $a, b\in\hqfg$.
\end{enumerate}
\end{lemma}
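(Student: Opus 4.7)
The core of the lemma is part (a); parts (b) and (c) then follow with modest extra work, (b) by $\FF$-linearity and (c) from the explicit form of the leading monomial in the product combined with the injectivity of $\sigma$ when $\degg f\geq 1$.

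To prove (a), the plan is to first establish the key commutation formula $y^\beta x^{\tilde\alpha} = q^{\tilde\alpha\beta}\, x^{\tilde\alpha} y^\beta + r_{\tilde\alpha,\beta}$ with $r_{\tilde\alpha,\beta}\in \mathcal F^-_{(\tilde\alpha,\beta)}$, by induction on $\beta$ (with an auxiliary induction on $\tilde\alpha$ to handle $\beta=1$). The base case is the defining relation $yx=qxy+g(h)$, since $g(h)$ has $\DEg$ equal to $(0,0)<(1,1)$. In the inductive step I combine $y^{\beta-1}x^{\tilde\alpha}=q^{\tilde\alpha(\beta-1)}x^{\tilde\alpha}y^{\beta-1}+r_0$ and $yx^{\tilde\alpha}=q^{\tilde\alpha}x^{\tilde\alpha}y+r_1$, rewrite everything in normal form using the commutation rules $ys(h)=\sigma(s)(h)y$ and $s(h)x=x\sigma(s)(h)$ valid for all $s\in\FF[h]$, and check that the resulting error terms still sit in $\mathcal F^-_{(\tilde\alpha,\beta)}$. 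Once the formula is in hand, writing $(x^\alpha py^\beta)(x^{\tilde\alpha}\tilde py^{\tilde\beta}) = x^\alpha p\seq{y^\beta x^{\tilde\alpha}}\tilde py^{\tilde\beta}$ yields the desired leading term $q^{\tilde\alpha\beta}x^{\alpha+\tilde\alpha}\sigma^{\tilde\alpha}(p)\sigma^\beta(\tilde p)y^{\beta+\tilde\beta}$ plus a remainder $x^\alpha p\, r_{\tilde\alpha,\beta}\,\tilde py^{\tilde\beta}$ which, by the same commutation rules together with $r_{\tilde\alpha,\beta}\in\mathcal F^-_{(\tilde\alpha,\beta)}$, lies in $\mathcal F^-_{(\alpha+\tilde\alpha,\beta+\tilde\beta)}$.

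Part (b) is then an exercise in bilinearity: any element of $\mathcal F_{(\alpha,\beta)}$ is an $\FF$-linear combination of basis monomials $x^ip(h)y^j$ with $(i,j)\leq(\alpha,\beta)$ by Proposition~\ref{P:basic}(a), and (a) handles each product of monomials, which reduces $\mathcal F_{(\alpha,\beta)}\mathcal F_{(\tilde\alpha,\tilde\beta)}\subseteq\mathcal F_{(\alpha+\tilde\alpha,\beta+\tilde\beta)}$ to monomial-by-monomial checking. Monotonicity of $(i,j)\mapsto \mathcal F_{(i,j)}$ and the exhaustion $\bigcup_{i,j\geq 0}\mathcal F_{(i,j)}=\hqfg$ are immediate from the definition and the basis, completing the filtration axioms. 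For (c), assume $q\neq 0$ and $\degg f\geq 1$: then $\sigma$ sends $h$ to the non-constant polynomial $f(h)$, so it is an injective $\FF$-algebra endomorphism of $\FF[h]$, and $\sigma^{\tilde\alpha}(p)\sigma^\beta(\tilde p)$ is nonzero in the domain $\FF[h]$ whenever $p$ and $\tilde p$ are. Combined with $q^{\tilde\alpha\beta}\neq 0$, this makes the leading term in (a) a nonzero basis element of lex degree exactly $(\alpha+\tilde\alpha,\beta+\tilde\beta)$ which dominates the lower error, yielding $\DEg(ab)=\DEg(a)+\DEg(b)$ for nonzero $a,b$; the zero case is covered by the convention $\DEg 0=(-\infty,-\infty)$.

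The hard part will be the lex-degree bookkeeping in (a): one must verify that the error in $y^\beta x^{\tilde\alpha}$ actually lies in $\mathcal F^-_{(\tilde\alpha,\beta)}$ and not merely in $\mathcal F_{(\tilde\alpha,\beta)}$. The saving observation is that each application of $yx=qxy+g(h)$ has two outcomes: the $qxy$ summand preserves the number of $x$'s and $y$'s, while the $g(h)$ summand annihilates one of each. Consequently every non-leading monomial produced during the normal-ordering of $y^\beta x^{\tilde\alpha}$ contains strictly fewer than $\tilde\alpha$ copies of $x$, which forces a strictly smaller first coordinate and so a strictly smaller lex degree; this is what underlies the inductive step and ultimately propagates to the containment in $\mathcal F^-_{(\alpha+\tilde\alpha,\beta+\tilde\beta)}$ needed to close the argument in (a).
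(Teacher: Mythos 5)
Your proof is correct and takes essentially the same route as the paper's (very terse) argument: induction on $\beta$ to establish the normal-ordering formula behind part (a), then (b) by bilinearity and compatibility of the lexicographic order with addition, and (c) from $q\neq 0$ together with the injectivity of $\sigma$ when $\degg f\geq 1$. Your closing observation—that every application of $yx=qxy+g(h)$ either preserves or strictly decreases the number of $x$'s, so all error monomials have strictly smaller first coordinate—is exactly the point that makes the error land in $\mathcal F^-_{(\alpha+\tilde\alpha,\beta+\tilde\beta)}$ rather than merely $\mathcal F_{(\alpha+\tilde\alpha,\beta+\tilde\beta)}$.
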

\begin{proof}
The first claim can be easily proved by induction on $\beta$. Whence, the remaining claims follow, noting that $\sigma$ is injective if and only if $\degg f\geq 1$.
\end{proof}

Let $\mathsf{Gr}\seq{\hqfg}=\bigoplus\limits_{\alpha, \beta\geq0} \sfrac{\mathcal F_{(\alpha, \beta)}}{\mathcal F^-_{(\alpha, \beta)}}$ be the associated graded algebra. Then $\mathsf{Gr}\seq{\hqfg}$ is generated by the images of the canonical generators $\overline x$, $\overline y$ and $\overline h$, with relations:
\begin{equation}\label{E:graded:qgha}
\overline h\overline x=\overline xf(\overline h), \quad \overline y\overline h=f(\overline h)\overline y, \quad \overline y\,\overline x=q\overline x\,\overline y.
\end{equation} 
In other words, we have $\mathsf{Gr}\seq{\hqfg}\simeq\hqfg[f,0]$.

\subsection{Gelfand-Kirillov dimension and relation to generalized down-up algebras}\label{SS:basic:GK}

It is known (\cite[Cor.\ 3.2]{BR98} and \cite[Cor.\ 2.4]{CS04}) that all (generalized) down-up algebras have Gelfand-Kirillov dimension $3$. In view of Proposition~\ref{P:intro:gduaRqgha}, if $\degg f\leq 1$ then $\hqfg$ is a generalized down-up algebra and thus $\operatorname{GKdim}\hqfg=3$. We will see next that this no longer holds if $\degg f>1$. (For the definition and properties of Gelfand-Kirillov dimension see \cite{KL00}.)

\begin{prop}\label{P:basic:GK}
Let $\hqfg$ be a qGHA. Then $\operatorname{GKdim}\hqfg=3$ if and only if $\degg f\leq 1$. If $\degg f>1$ then $\operatorname{GKdim}\hqfg\geq 4$.
\end{prop}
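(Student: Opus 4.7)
The plan is to treat the two halves of the statement separately.

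For the easy direction $\degg f \leq 1 \Rightarrow \operatorname{GKdim}\hqfg = 3$, I would invoke Proposition~\ref{P:intro:gduaRqgha}, which realizes $\hqfg$ as a generalized down-up algebra whenever $\degg f \leq 1$, and then appeal to \cite[Cor.\ 2.4]{CS04}, which gives the value $3$ in that setting.

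For the other direction, assume $d = \degg f \geq 2$; I aim to show that already the unital subalgebra $R \subseteq \hqfg$ generated by $h$ and $x$ has infinite Gelfand-Kirillov dimension, from which $\operatorname{GKdim}\hqfg = \infty \geq 4$ follows by monotonicity of $\operatorname{GKdim}$ under subalgebras. By Proposition~\ref{P:basic}(a) with $v_j = h^j$, the set $\pb{x^i h^j\mid i,j\geq 0}$ is an $\FF$-basis of $R$, which I order by $x$-degree first and then $h$-degree to define a notion of \emph{leading PBW monomial}. Take $W = \FF + \FF h + \FF x$ as a generating subspace of $R$ and, for each $k \geq 0$ and each tuple $(a_0, \ldots, a_k) \in \pb{0,1}^{k+1}$, form the word
$$
w(a_0, \ldots, a_k) = h^{a_0} x h^{a_1} x \cdots x h^{a_k},
$$
of length $k + \sum a_j \leq 2k+1$ in the generators. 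Repeated application of the identity $p(h)\,x = x\,p(f(h))$ (an immediate consequence of $hx = xf(h)$) puts this word into PBW form
$$
w(a_0, \ldots, a_k) = x^k\prod_{j=0}^{k} \sigma^{k-j}(h)^{a_j}.
$$
Since $\sigma^m(h) = f^{(m)}(h)$ has degree $d^m$ with nonzero leading coefficient, the leading PBW monomial of $w(a_0, \ldots, a_k)$ is $x^k h^{M}$, with $M = \sum_{j=0}^{k} a_j d^{k-j}$ and nonzero coefficient.

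Because the digits $a_j$ lie in $\pb{0,1}$ and $d \geq 2$, the integer $M$ is the base-$d$ expansion with these digits, and thus the assignment $(a_0, \ldots, a_k)\mapsto M$ is injective. Consequently, for each fixed $k$ the $2^{k+1}$ words $w(a_0,\ldots,a_k)$ have pairwise distinct leading monomials $x^k h^M$, and hence are linearly independent in $R$ by triangularity. Choosing $k = \lfloor (n-1)/2 \rfloor$, so that $2k+1 \leq n$, all of them lie in $W^n$, giving $\dim W^n \geq 2^{k+1}$. This exponential growth forces $\operatorname{GKdim} R = \infty$, whence $\operatorname{GKdim}\hqfg = \infty \geq 4$.

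The main step requiring care is the computation of the PBW normal form of $w(a_0, \ldots, a_k)$; once one recognizes that its $h$-exponent is precisely a base-$d$ integer with digits $a_j$, the exponential lower bound on $\dim W^n$ (and hence the infinitude of the Gelfand-Kirillov dimension) is immediate.
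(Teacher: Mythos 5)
Your proof is correct, and it takes a genuinely different route from the paper's. The paper first passes to the associated graded algebra of the filtration of Subsection~\ref{SS:basic:filtration} (reducing to $g=0$), then factors out the two-sided ideal generated by $x$ so as to exhibit the Ore extension $\FF[h][y;\sigma]$ as a quotient, and finally combines \cite[Prop.\ 3.15]{KL00} (which requires $x$ to be right regular) with Zhang's theorem \cite[Thm.\ 1.1]{jZ97} to conclude $\operatorname{GKdim}\hqfg\geq 3+1=4$. You instead work directly inside the subalgebra generated by $h$ and $x$ and produce, via the base-$d$ digit argument, $2^{k+1}$ linearly independent elements among words of length at most $2k+1$; the normal-form computation $w(a_0,\dots,a_k)=x^k\prod_{j}\sigma^{k-j}(h)^{a_j}$ and the injectivity of $(a_0,\dots,a_k)\mapsto\sum_j a_j d^{k-j}$ are both correct. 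Your approach is more elementary (no filtration, no regularity hypothesis, no external theorems beyond the PBW basis of Proposition~\ref{P:basic}(a) and monotonicity of $\operatorname{GKdim}$ under subalgebras) and it proves strictly more, namely that $\operatorname{GKdim}\hqfg=\infty$ whenever $\degg f>1$, not merely $\geq 4$; moreover, since it uses only the relation $hx=xf(h)$, it is completely insensitive to $q$ and $g$, whereas the paper's appeal to right regularity of $x$ in $\hqfg[f,0]$ is delicate when $q=0$. What the paper's route buys is brevity given the cited machinery and the identification of the quotient with a standard skew polynomial ring. One cosmetic point: your notation $f^{(m)}$ for the $m$-fold composite clashes with the derivative notation $f^{(n-1)}$ used later in the paper; it is safer to write $\sigma^m(h)$ throughout.
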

\begin{proof}
Assume that $\degg f>1$. Although \cite[Lem.\ 6.5]{KL00} is phrased in terms of $\ZZ$-filtered algebras, its proof carries through to general filtrations like the one we defined in Subsection~\ref{SS:basic:filtration}. Thus, by that result, it is enough to show that $\operatorname{GKdim}\mathsf{Gr}\seq{\hqfg}\geq 4$. Equivalently, we can assume without loss of generality that $g=0$.

Let $I=\seq{x}$ be the two-sided ideal of $\hqfg[f,0]$ generated by $x$. Since $x$ is right regular, by Proposition~\ref{P:basic}, then \cite[Prop.\ 3.15]{KL00} says that $\operatorname{GKdim}\hqfg[f,0]\geq \operatorname{GKdim}\hqfg[f,0]/I +1$. Thus, it is enough to show that $\operatorname{GKdim}\hqfg[f,0]/I \geq 3$. 

The algebra $\hqfg[f,0]/I$ is isomorphic to the unital subalgebra $R$ of $\hqfg[f,0]$ generated by $h$ and $y$. Then $R$ can be seen as the Ore extension $\FF[h][y;\sigma]$, with $\sigma(h)=f(h)$. Since $\degg f>1$, it is obvious that the only finite-dimensional subspaces of $\FF[h]$ which are $\sigma$-stable are $0$ and $\FF$. Thus, $\FF1\oplus\FF h$ is not contained in any finite-dimensional $\sigma$-stable subspace of $\FF[h]$. Then, by \cite[Thm.\ 1.1]{jZ97}, it follows that $\operatorname{GKdim}R=\operatorname{GKdim}\FF[h][y;\sigma]\geq \operatorname{GKdim}\FF[h]+2=3$. 
\end{proof}

Thanks to Proposition~\ref{P:intro:gduaRqgha}, we know that generalized down-up algebras are precisely the qGHA $\hqfg$ with $\degg f\leq1$. On the other hand, if $\degg f>1$ then Proposition~\ref{P:basic:GK} says that $\operatorname{GKdim}\hqfg\geq 4$, so in this case $\hqfg$ cannot be isomorphic to a generalized down-up algebra. So we obtain the following, which can be thought of as a prelude to the classification result in Theorem~\ref{T:iso}.

\begin{cor}\label{gduaimpdf1}
The quantum generalized Heisenberg algebra $\hqfg$ is isomorphic to a generalized down-up algebra if and only if $\degg f\leq1$.
\end{cor}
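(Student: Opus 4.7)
The proof is essentially a one-paragraph consequence of results already in place, so the plan is to package the two directions as a clean biconditional.

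For the ($\Leftarrow$) direction, I would simply invoke Proposition~\ref{P:intro:gduaRqgha}: if $\degg f\leq 1$, write $f(h)=ah+b$ and conclude that $\hqfg\simeq L(-g,a,q,-b)$ is itself a generalized down-up algebra, so in particular it is isomorphic to one.

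For the ($\Rightarrow$) direction, I would argue by contrapositive using Gelfand--Kirillov dimension as the isomorphism invariant. Assume $\degg f>1$. By Proposition~\ref{P:basic:GK} we have $\operatorname{GKdim}\hqfg\geq 4$. On the other hand, it is a standard fact (\cite[Cor.\ 3.2]{BR98}, \cite[Cor.\ 2.4]{CS04}) that every generalized down-up algebra $L(v,r,s,\gamma)$ has Gelfand--Kirillov dimension $3$. Since GK dimension is preserved under isomorphism, $\hqfg$ cannot be isomorphic to any generalized down-up algebra.

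There is no real obstacle here: all the technical work has been done in Proposition~\ref{P:basic:GK} (whose proof relies on the $\ZZ^2$-filtration and the passage to the associated graded $\hqfg[f,0]$, together with the Ore-extension GK-dimension computation from \cite{jZ97}). The role of the corollary is mainly to record this GK-dimension jump as a concrete isomorphism obstruction, motivating the finer isomorphism classification in Section~\ref{S:class}.
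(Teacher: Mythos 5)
Your argument is correct and is essentially identical to the paper's: the forward direction follows from Proposition~\ref{P:intro:gduaRqgha}, and the reverse direction uses Proposition~\ref{P:basic:GK} together with the fact that all generalized down-up algebras have Gelfand--Kirillov dimension $3$, with GK dimension as the isomorphism invariant. Nothing is missing.
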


\section{Noetherian quantum generalized Heisenberg algebras and down-up algebras}\label{S:noetherian}

Any generalized down-up algebra has the property that it is Noetherian if and only if it is a domain (see~\cite{KMP99} and \cite{CS04}). It is natural to wonder whether this property still holds for a qGHA. In this section we determine when a quantum generalized Heisenberg algebra $\hqfg$ is Noetherian and give a negative answer to the above question.

\begin{prop}\label{P:noetherian}
A qGHA $\hqfg$ is right (or left) Noetherian if and only if $\degg f=1$ and $q\neq0$.
\end{prop}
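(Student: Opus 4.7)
The strategy splits into the two implications. For the easy direction $(\Leftarrow)$, assume $\degg f=1$ and $q\neq 0$. Writing $f(h)=ah+b$ with $a\neq 0$, Proposition~\ref{P:intro:gduaRqgha} identifies $\hqfg$ with the generalized down-up algebra $L(-g,a,q,-b)$, and the Cassidy--Shelton criterion (\cite{CS04}, extending \cite{KMP99}) delivers Noetherianness since $aq\neq 0$.

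For the converse $(\Rightarrow)$, I assume $\hqfg$ is right Noetherian; via the anti-automorphism $\iota$ of Subsection~\ref{SS:basic:autos} this is equivalent to left Noetherianness. The subcase $\degg f\leq 1$ is handled as above: the Cassidy--Shelton criterion applied to $L(-g,a,q,-b)$ forces $a\neq 0$ and $q\neq 0$, giving $\degg f=1$. The substantive case is $\degg f\geq 2$, in which I will exhibit an explicit infinite strictly ascending chain of left ideals, contradicting Noetherianness.

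Set $g_0(h):=f(h)-h$, a polynomial of degree $d:=\degg f\geq 2$, and define
\[L_n:=\sum_{k=1}^{n+1}\hqfg\cdot g_0(h)y^k\qquad(n\geq 0).\]
Using the PBW basis $\{x^ih^jy^k\}$ of Proposition~\ref{P:basic}(a) and the iterated commutation $y^mp(h)=p(f^m(h))y^m$, expanding $r\cdot g_0(h)y^k$ for a general $r\in\hqfg$ shows that, for each pair $(i,\ell)$ with $\ell\geq k$, the $x^iy^\ell$-coefficient of an element of $\hqfg\cdot g_0(h)y^k$ ranges independently over the principal ideal $(f^{\ell-k+1}(h)-f^{\ell-k}(h))\FF[h]=g_0(f^{\ell-k}(h))\FF[h]$, and is zero for $\ell<k$. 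Summing over $k$, the critical slot $\ell=n+2$ of $L_n$ is the $\FF[h]$-ideal $\bigl(g_0(f),g_0(f^2),\ldots,g_0(f^{n+1})\bigr)$, whereas in $L_{n+1}$ the additional generator $g_0(f^0)=g_0$ is present.

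The key lemma I will prove is the divisibility chain $g_0\mid g_0(f)\mid g_0(f^m)$ in $\FF[h]$ for every $m\geq 1$. This follows by working modulo $g_0(f)$: the class of $f(h)$ is a root of $g_0$, i.e., a fixed point of $f$, so $f^m$ fixes it and $g_0(f^m)\equiv g_0(f)\equiv 0$; the analogous argument modulo $g_0$ gives $g_0\mid g_0(f^m)$. Consequently the slot $\ell=n+2$ of $L_n$ collapses to $(g_0(f))$, while that of $L_{n+1}$ is $(g_0)$. Since $\degg g_0(f)=d^2>d=\degg g_0$ (using $d\geq 2$), these ideals are distinct, and the element $g_0(h)y^{n+2}$ lies in $L_{n+1}\setminus L_n$, proving strict inclusion. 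The main obstacle is the PBW bookkeeping underlying the slot-wise description of $L_n$; once settled, the divisibility and degree arguments are clean and---importantly---work uniformly over an arbitrary ground field $\FF$, with no need for base change or for $f$ to have a fixed point in $\FF$.
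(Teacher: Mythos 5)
Your proof is correct, and the hard direction ($\degg f\geq 2$ implies non-Noetherian) follows a genuinely different route from the paper's. The paper first uses the translation isomorphism of Lemma~\ref{L:basic:auto1} to normalize $f$ so that $0$ is a fixed point (i.e.\ $f\in h\FF[h]$), then takes the chain $I_n=\sum_{i=0}^{n}\hqfg\, h y^i$ and derives strictness from $\sigma^k(h)\in f\FF[h]$; since this normalization needs a root of $f(h)-h$ in the ground field, the paper must add a second case that passes to a finite field extension $\mathbb E$ and descends non-Noetherianity via the finite-module argument. You instead build the chain directly from $g_0=f-h$, which encodes the fixed-point condition algebraically: your key lemma $g_0(f)\mid g_0(f^m)$ (proved by the congruence computation modulo $g_0(f)$, valid in any quotient of $\FF[h]$) together with the degree count $\degg g_0(f)=(\degg f)^2>\degg f=\degg g_0$ shows each slot ideal genuinely grows. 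Both arguments rest on the same PBW bookkeeping from Proposition~\ref{P:basic}(a) to reduce ideal membership to a coefficient-wise condition, and both products $x^ip(h)y^j\cdot c(h)y^k$ are exact with no lower-order corrections since no $y$ ever passes an $x$. What your version buys is uniformity: it works over an arbitrary field with no base change and no appeal to Noetherian descent along finite extensions; what the paper's version buys is a slightly more elementary divisibility statement ($\sigma^k(h)\in f\FF[h]$) once the normalization is in place. Your treatment of the remaining cases ($\degg f\leq 1$ via the Cassidy--Shelton criterion in both directions) matches the paper's.
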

\begin{proof}
Since $\hqfg$ is isomorphic to its opposite algebra (via the anti-automorphism $\iota$ defined in Subsection~\ref{SS:basic:autos}) it is enough to consider the Noetherian property on the left. If $\degg f=1$ and $q\neq0$ then, by Proposition~\ref{P:intro:gduaRqgha}, $\hqfg\simeq L(-g,a,q,-b)$ for $f(h)=ah+b$. Since $a,q\neq0$, \cite[Prop.\ 2.5]{CS04} proves that $\hqfg$ is Noetherian.
	
Let us now prove the converse. Suppose first that $\degg f\neq1$. Then $F(h)=f(h)-h$ is not a constant polynomial.

\smallskip

\textbf{Case 1.} Suppose there is $\beta\in \FF$ such that $F(\beta)=0$. Then $f(\beta)=\beta$. By Lemma~\ref{L:basic:auto1} with $\alpha=-\beta$ we have $\hqfg\simeq\mathcal H_q(\tilde{f},g(h+\beta))$, where $\tilde{f}(h)=f(h+\beta)-\beta$. Notice that $\tilde{f}(0)=f(\beta)-\beta=0$, so $\tilde{f}\in h\FF[h]$. Thus, since $\degg \tilde{f}=\degg f$ and being Noetherian is invariant under isomorphism, we can assume without loss of generality that $f(h)\in h\FF[h]$.
	
Consider the chain of left ideals
\begin{equation}\label{E:Noetherian:chain}
I_0\subseteq \cdots\subseteq I_n\subseteq I_{n+1}\subseteq \cdots
\end{equation}	
where $I_n=\sum\limits_{i=0}^{n} \hqfg hy^i$.
As we know from Proposition~\ref{P:basic} that $\hqfg=\oplus_{j,k\ge 0}x^j\FF[h]y^k$, we can write
	\begin{align}
	I_n=\sum\limits_{j,k}^{}\sum\limits_{i=0}^{n}x^j\FF[h]y^khy^i=\label{yy} \sum\limits_{j,k}^{}\sum\limits_{i=0}^{n}x^j\FF[h]\sigma^{k}(h) y^{i+k}.
	\end{align}

We will show that, for all $n\geq 0$, the inclusion $I_n\subseteq I_{n+1}$ is strict. Otherwise, $hy^{n+1}\in I_{n}$. Then it follows that 
\begin{equation}\label{Equ32}
hy^{n+1}\in\sum\limits_{j,k}^{}\sum\limits_{i=0}^{n}x^j\FF[h]\sigma^{k}(h) y^{i+k}
\end{equation}
and, again by Proposition~\ref{P:basic}, we can take $j=0$ and $k=n-i+1$ in \eqref{Equ32}. Hence, there exist $p_i(h)\in \FF[h]$ such that 
\begin{equation*}
hy^{n+1}=\left(  \sum\limits_{i=0}^{n}p_i(h)\sigma^{n+1-i}(h)\right) y^{n+1},
\end{equation*}
and thus
\begin{equation}\label{Equ17}
h=\sum\limits_{i=0}^{n}p_i(h)\sigma^{n+1-i}(h). 
\end{equation}

We will show that $\sigma^k(h)\in f\FF[h]$ for all $k\ge 1$. The $k=1$ case is just the definition $\sigma(h)=f$. Recall that $f(h)\in h\FF[h]$, hence there exists $\zeta(h)\in \FF[h]$ such that $f(h)=h\zeta(h)$. For the inductive step, assuming that $\sigma^k(h)=f(h)p(h)$ for some $p(h)\in\FF[h]$, we have
	\begin{align*}
	\sigma^{k+1}(h)=\sigma(f(h)p(h))=\sigma(h\zeta(h)p(h))=\sigma(h)\sigma(\zeta(h)p(h))=f(h)\sigma(\zeta(h)p(h))\in f\FF[h].
	\end{align*}
Then, in particular, equation \eqref{Equ17} implies that $h\in f\FF[h]$. Since we are also assuming that $f\in h\FF[h]$, it follows that $\degg f=1$, which is a contradiction. This proves that \eqref{E:Noetherian:chain} is a strict ascending chain of left ideals and $\hqfg$ is not left Noetherian.
	
\smallskip

\textbf{Case 2.}  Suppose now that there is no $\beta\in \FF$ such that $F(\beta)=0$. Since $F$ is not a constant polynomial, there is a finite field extension $\mathbb E$ of $\FF$ and $\beta\in \mathbb E$ such that $F(\beta)=0$. Then we consider the algebra $\hqfg\otimes_{\FF}\mathbb E$. Since $\mathbb E$ is a finite extension of $\FF$ it follows that $\hqfg\otimes_{\FF}\mathbb E$ is a finite module over $\hqfg$. If $\hqfg$ were Noetherian, we would conclude that $\hqfg\otimes_{\FF}\mathbb E$ is also Noetherian. But $\hqfg\otimes_{\FF}\mathbb E$ is just a quantum generalized Heisenberg algebra defined over the field $\mathbb E$ with exactly the same parameters $q$, $f$ and $g$. As there is $\beta\in\mathbb E$ with $f(\beta)=\beta$, Case~1 implies that $\hqfg\otimes_{\FF}\mathbb E$ is not Noetherian; thus, neither is $\hqfg$.

\smallskip
	
It remains to consider $q=0$ with $\degg f=1$. But in this case, by Proposition~\ref{P:intro:gduaRqgha}, $\hqfg$ is isomorphic to a generalized down-up algebra $L(-g, a,0,-b)$ and by~\cite[Prop.\ 2.6]{CS04} $\hqfg$ is not Noetherian.
\end{proof}

\begin{remark}
The above result also implies Corollary~\ref{gduaimpdf1} under the additional assumption that $q\neq 0$, using the fact that only the Noetherian generalized down-up algebras are domains (\cite{CS04}) and that $\hqfg$ is a domain if $\degg f>0$ and $q\neq 0$.
\end{remark}

\section{Classification of quantum generalized Heisenberg algebras}\label{S:class}

It is quite common for different sets of generators and relations to yield the same intrinsic structure. To detect this, we need to study all possible isomorphisms among qGHA. Isomorphisms can also be a very powerful way of simplifying arguments and computations. The isomorphism problem for generalized Heisenberg algebras $\hh(f)$ over the field of complex numbers was tackled in~\cite{LZ15}. Here we consider the classification problem for quantum generalized Heisenberg algebras $\hqfg$ over an arbitrary field.

By Corollary~\ref{gduaimpdf1}, we know that a qGHA with $\degg f\leq1$ cannot be isomorphic to another qGHA with $\degg f>1$. Moreover, in case $\degg f\leq1$ we obtain a generalized down-up algebra, whose isomorphisms have been studied elsewhere (see~\cite{BR98}, \cite{CM00}, \cite{CL09} and \cite{LZ15}). We will thus focus on the case $\degg f>1$. Moreover, to avoid technicalities, throughout the remainder of this paper we will assume that $q\neq 0$.

It will be proved in this section that the isomorphism relation among the quantum generalized Heisenberg algebras with $\degg f>1$ can be phrased in very concrete geometric terms, very much like in \cite[Thm.\ 3.28]{BJ01} (compare Proposition~\ref{Prop7}). It will follow in particular that, in case $q\neq 0$ and $\degg f>1$, the parameter $q$, as well as the integers $\degg f$ and $\degg g$, are invariant under isomorphism, showing that qGHA are indeed a vast generalization of generalized Heisenberg algebras and generalized down-up algebras.

We begin by listing three types of isomorphisms from which, as we will see, all other isomorphisms can be determined.

\begin{prop}\label{Prop7} 
Let $q\in\FF$ and $f,g \in\FF[h]$. The following define isomorphisms of qGHA.
\begin{enumerate}[label=\Roman*.,ref=\Roman*]
\item For all $\alpha\in\FF$, $\tau_{\alpha}:\hqfg\longrightarrow \hqfg[f(h-\alpha)+\alpha, g(h-\alpha)]$, defined on the canonical generators by $x\mapsto  x$, $y\mapsto  y$ and $h\mapsto  h-\alpha$.\label{P:7:I}

\item For all $\lambda\in\FF^*$, $\sigma_{\lambda}:\hqfg\longrightarrow \hqfg[\lambda f(\lambda^{-1}h), g(\lambda^{-1}h)]$, defined on the canonical generators by $x\mapsto  x$, $y\mapsto  y$ and $h\mapsto  \lambda^{-1}h$.\label{P:7:II}

\item For all $\lambda, \mu\in\FF^*$, $\rho_{\lambda,\mu}:\hqfg\longrightarrow \hqfg[f,\lambda\mu g]$, defined on the canonical generators by $x\mapsto  \lambda^{-1}x$, $y\mapsto  \mu^{-1}y$ and $h\mapsto h$.\label{P:7:III}
\end{enumerate}
\end{prop}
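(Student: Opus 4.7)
My plan is to handle each of the three maps by the same two-step recipe: first exhibit the map as an $\FF$-algebra homomorphism out of the free algebra $\FF\langle x,y,h\rangle$ and check that it kills the defining ideal of $\hqfg$ (so that it descends to the target qGHA), and then verify it is invertible by producing an inverse of the same type.

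For the well-definedness step, in each case I would substitute the images of $x$, $y$, $h$ into each of the three defining relations in \eqref{E:intro:def:qgha} and check that the result is precisely the corresponding defining relation in the target algebra. For $\tau_\alpha$, this is essentially the computation already performed in the proof of Lemma~\ref{L:basic:auto1}, so I would just cite that lemma. For $\sigma_\lambda$, the key observation is that $p(\lambda^{-1}h)=\sigma_\lambda(p(h))$ for any $p\in\FF[h]$, so the relation $hx=xf(h)$ becomes $\lambda^{-1}hx=xf(\lambda^{-1}h)$, i.e.\ $hx=x\bigl(\lambda f(\lambda^{-1}h)\bigr)$, which is the new first relation, and similarly for the other two. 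For $\rho_{\lambda,\mu}$, the relations involving $h$ are homogeneous of the same total degree on both sides, so the scalars $\lambda^{-1}$ and $\mu^{-1}$ cancel, while the last relation picks up a global factor of $(\lambda\mu)^{-1}$, which is absorbed by replacing $g$ with $\lambda\mu g$.

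For the inverse step, the natural candidates are $\tau_\alpha^{-1}=\tau_{-\alpha}$ (now starting from $\hqfg[f(h-\alpha)+\alpha, g(h-\alpha)]$), $\sigma_\lambda^{-1}=\sigma_{\lambda^{-1}}$, and $\rho_{\lambda,\mu}^{-1}=\rho_{\lambda^{-1},\mu^{-1}}$. In each case one checks that the composition acts as the identity on the generators $x$, $y$, $h$, and one verifies that the iterated polynomial substitutions collapse: e.g.\ $f(h-\alpha)+\alpha$ evaluated at $h+\alpha$ returns $f(h)$, and $\lambda\cdot\lambda^{-1}f(\lambda\cdot\lambda^{-1}h)=f(h)$.

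I do not anticipate a genuine obstacle here, since the proposition is a catalogue of normalisations of the generators and each verification is a direct substitution; the only thing to be careful about is bookkeeping of which qGHA is the source and which is the target at each step, so that the composed maps really land in the expected algebra and the defining ideal is correctly identified on both sides.
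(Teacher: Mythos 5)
Your proposal is correct and follows essentially the same route as the paper, which simply cites Lemma~\ref{L:basic:auto1} for Type~\ref{P:7:I} and notes that Types~\ref{P:7:II} and~\ref{P:7:III} are checked by the same substitution of generator images into the defining relations. The only cosmetic difference is that the paper gets invertibility for free by observing that the map is induced by an automorphism of the free algebra carrying one defining ideal onto the other, whereas you exhibit the inverse explicitly as a map of the same type; both are fine.
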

\begin{proof}
The isomorphism in~\ref{P:7:I} is just the one from Lemma~\ref{L:basic:auto1}. The remaining ones can be easily checked just as in the proof of that result.
\end{proof}

We will refer to the isomorphisms from Proposition~\ref{Prop7} either by the notation established there, if we need to specify the parameters involved (e.g.\ $\rho_{\lambda,\mu}$), or by a \textit{type}, matching the numbering above (e.g., an isomorphism of Type~\ref{P:7:III} is one of the form $\rho_{\lambda,\mu}$, for some $\lambda, \mu\in\FF^*$).

\begin{thm}\label{T:iso}
Assume $q\neq0$ and $\degg f>1$. Then $\hqfg\simeq \hqqfg$ if and only if $q=q'$ and $(f',g')$ is obtained from $(f,g)$ via transformations of types \ref{P:7:I}, \ref{P:7:II}, \ref{P:7:III} defined in Proposition~\ref{Prop7}. It follows in particular that $\degg f=\degg f'$ and $\degg g=\degg g'$.
\end{thm}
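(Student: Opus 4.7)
The ``if'' direction is immediate from Proposition~\ref{Prop7}. For the converse, let $\Phi\colon\hqfg\to\hqqfg$ be an isomorphism. By Corollary~\ref{gduaimpdf1}, $\degg f'>1$, and by Proposition~\ref{P:basic}(b) both algebras are domains, so $q'\neq 0$ as well. The plan proceeds in four stages: (i) identify $\Phi(h)$ as a linear polynomial in $h'$; (ii) absorb that polynomial into a composition of transformations of types~\ref{P:7:I} and~\ref{P:7:II}, reducing to $\Phi(h)=h'$; (iii) pin down $\Phi(x)$ and $\Phi(y)$ as nonzero scalar multiples of $x'$ and $y'$; and (iv) extract $q=q'$ and the type~\ref{P:7:III} rescaling from the third defining relation.

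For~(i), setting $a=\Phi(h)$ and $b=\Phi(x)$, the relation $ab=bf(a)$ combined with the additivity of $\DEg$ from Lemma~\ref{L:basic:filtration}(c) and the observation that $\DEg f(a)=(\degg f)\DEg a$ when $\DEg a>(0,0)$ (only the leading term of $f$ contributes in lex order) forces $(\degg f-1)\DEg a=0$; since $\degg f>1$, this gives $\DEg a=(0,0)$, so $\Phi(h)=p(h')$ for some nonconstant $p\in\FF[h']$. Running the same argument on $\Phi^{-1}$ produces $\Phi^{-1}(h')=\tilde p(h)$, and the identity $\tilde p\circ p=1_{\FF[h']}$ then forces $\degg p=1$, say $p(h')=\lambda h'+\mu$ with $\lambda\in\FF^*$. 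For~(ii), composing $\Phi$ with the inverse of the type~\ref{P:7:I}--type~\ref{P:7:II} composite that realises $h\mapsto\lambda h+\mu$ replaces $(f,g)$ by an intermediate pair $(\tilde f,\tilde g)$ with $\tilde f(h)=\lambda^{-1}(f(\lambda h+\mu)-\mu)$ and $\tilde g(h)=g(\lambda h+\mu)$, and allows me to assume $\Phi(h)=h'$ henceforth.

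With $\Phi(h)=h'$, expanding $\Phi(x)=\sum_{i,j}(x')^ib_{ij}(h')(y')^j$ in the PBW basis and matching coefficients in $h'\,\Phi(x)=\Phi(x)\,f(h')$ gives $\sigma'^i(h')=f(\sigma'^j(h'))$ for each $(i,j)$ with $b_{ij}\neq 0$. Degree comparison, combined with the symmetric argument for $\Phi^{-1}$, forces $\degg f=\degg f'$ and then $i=j+1$; plugging this back yields $f=f'$, and similarly $\Phi(y)$ has support on $j=i+1$. In particular $\Phi$ preserves the $\ZZ$-grading, so $\Phi(x)=x'B$ and $\Phi(y)=Cy'$ with $B,C\in\hqqfg_0$. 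Running the same machinery on $\Phi^{-1}$ produces $\Phi^{-1}(x')=x\tilde B$; then $x'B\,\Phi(\tilde B)=\Phi(x\tilde B)=x'$, and cancelling the regular element $x'$ yields $B\,\Phi(\tilde B)=1$. Lemma~\ref{L:basic:filtration}(c) now forces $\DEg B=\DEg \Phi(\tilde B)=(0,0)$, so both lie in $\FF[h']$, and inside this polynomial ring the equation $B\,\Phi(\tilde B)=1$ forces $B=b_0\in\FF^*$; symmetrically $C=c_0\in\FF^*$. I expect this unit argument to be the subtlest step of the plan, as it simultaneously exploits filtration additivity, the domain property of $\hqqfg$, and grading preservation on both sides of $\Phi$.

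Substituting $\Phi(x)=b_0x'$, $\Phi(y)=c_0y'$, $\Phi(h)=h'$ into $yx-qxy=g(h)$ and using $y'x'=q'x'y'+g'(h')$ in $\hqqfg$ produces
\[
b_0c_0(q'-q)\,x'y'+b_0c_0\,g'(h')=g(h').
\]
Since $x'y'\in x'\FF[h']y'$ and $g(h'),g'(h')\in\FF[h']$ lie in distinct summands of $\hqqfg_0=\bigoplus_i(x')^i\FF[h'](y')^i$, comparing components yields $q=q'$ and $g=b_0c_0\,g'$; the type~\ref{P:7:III} transformation $\rho_{b_0^{-1},c_0^{-1}}$ then maps $(\tilde f,\tilde g)=(f',b_0c_0g')$ to $(f',g')$, completing the reduction of $(f,g)$ to $(f',g')$ via transformations of types~\ref{P:7:I}, \ref{P:7:II}, \ref{P:7:III}. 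The equalities $\degg f=\degg f'$ and $\degg g=\degg g'$ are immediate, since each of the three transformation types acts on $f,g$ by an affine change of variable or by multiplication by a nonzero scalar, both of which preserve polynomial degree.
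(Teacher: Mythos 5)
Your proof is correct, and it follows the paper's overall skeleton: use the lexicographic degree $\DEg$ of Lemma~\ref{L:basic:filtration} to show $\Phi(h)$ is linear in $h'$, normalize to $\Phi(h)=h'$ by transformations of types~\ref{P:7:I} and~\ref{P:7:II}, and extract $q=q'$ together with the type~\ref{P:7:III} scalar from the relation $yx-qxy=g(h)$. The middle step, however, is carried out by a genuinely different argument. The paper identifies $\Phi(\mathsf{C}(h))$ with $\bigoplus_i (x')^i\FF[h'](y')^i$, deduces $\DEg\Phi(x)+\DEg\Phi(y)=(1,1)$ by applying $\Phi$ to an expansion of $\Phi^{-1}(x'y')$, and then runs a two-case analysis on which of $\Phi(x)$, $\Phi(y)$ has degree $(1,0)$, ruling out the crossed case and obtaining $f=f'$ and the constancy of the coefficient polynomials from the identity $q'p_2(f(H))p_1(f(H))=qp_1(H)p_2(H)$ together with $\degg f>1$. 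You instead constrain the PBW support of $\Phi(x)$ directly from $h'\Phi(x)=\Phi(x)f(h')$, which confines it to a single diagonal $i-j=d$ with $\degg f=(\degg f')^{d}$; the symmetric constraint from $\Phi^{-1}$ forces $d=1$, yielding $\degg f=\degg f'$, $f=f'$ and homogeneity of $\Phi(x)$ in one stroke, with no centralizer computation and no excluded case. You then obtain constancy of $B$ and $C$ from the unit equation $B\,\Phi(\tilde B)=1$, gotten by cancelling the regular element $x'$, rather than from the $q$-relation. Your route is somewhat cleaner and more symmetric in $\Phi$ versus $\Phi^{-1}$; the paper's route concentrates the information extraction in the defining relations of the target algebra, which is the computation it later reuses in Proposition~\ref{Prop9}. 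I find no gaps in your argument; the only cosmetic slips are the composition order in ``$\tilde p\circ p=1_{\FF[h']}$'' (it should be $p\circ\tilde p$ applied as $h=p(\tilde p(h))$, which is what your degree count actually uses) and the phrasing about $g(h')$, $g'(h')$ and $x'y'$ lying in ``distinct summands'' (the former two lie in the $i=0$ summand and the latter in the $i=1$ summand), neither of which affects the proof.
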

\begin{proof}
If $q=q'$ and $(f',g')$ is obtained from $(f,g)$ by the transformations defined in Proposition~\ref{Prop7}, then clearly $\hqfg\simeq \hqqfg$.
	
For the converse statement, suppose that $q\neq0$, $\degg f>1$ and $\hqfg\simeq \hqqfg$. Then by Propositions~\ref{P:basic} and~\ref{P:noetherian}, $\hqfg$ is a non-Noetherian domain, so the same holds for $\hqqfg$ and thus $q'\neq0$ and $\degg f'>1$. 
	
Let $\varphi:\hqfg\longrightarrow\hqqfg$ be an isomorphism. To avoid any ambiguity, we use $x$, $y$ and $h$ for the canonical generators of $\hqfg$ and $X$, $Y$ and $H$ for the canonical generators of $\hqqfg$. The automorphism of $\FF[H]$ which sends $H$ to $f'(H)$ is denoted $\sigma'$.
	
From the application of $\varphi$ to the first defining relation in~\eqref{E:intro:def:qgha} we obtain $\varphi(h)\varphi(x)=\varphi(x)\varphi(f(h))=\varphi(x)f(\varphi(h))$. Then, taking the lexicographical degree defined in~\eqref{E:lexdeg} we get $\DEg\varphi(h)+\DEg\varphi(x)=\DEg\varphi(x)+\DEg f(\varphi(h))$. As $\varphi(x)\neq 0$ and $\DEg f(\varphi(h))=\degg f\cdot\DEg\varphi(h)$, we infer that $\DEg\varphi(h)=\degg f\cdot\DEg\varphi(h)$. Finally, since $\degg f>1$ and $\varphi(h)\neq 0$, it must be that $\DEg \varphi(h)=(0,0)$. Hence, $\varphi(h)\in\FF[H]$. The same argument with $\varphi^{-1}$ shows that $\varphi^{-1}(H)\in\FF[h]$ and thence $\varphi_{|_{\FF[h]}}:\FF[h]\longrightarrow\FF[H]$ is an isomorphism. It follows that $\varphi(h)=aH+b$ for some $a,b\in\FF$ with $a\neq0$.
	
The composition $\tau_{ba^{-1}}\circ\varphi$ gives an isomorphism $\hqfg\longrightarrow\hqqfg[f'',g'']$, where $(f'',g'')$ is obtained from $(f',g')$ by a transformation of Type~\ref{P:7:I} and $\tau_{ba^{-1}}\circ\varphi(h)=\tau_{ba^{-1}}(aH+b)=aH$. So there is no loss in generality in assuming that $\varphi(h)=aH$. Similarly, using the transformation $\sigma_a$ of Type~\ref{P:7:II}, we can assume further that $\varphi(h)=H$.

Let $\mathsf{C}_{A}(a)$ denote the centralizer of an element $a\in A$, where $A$ is an algebra. By Proposition~\ref{P:basic} we have
\begin{equation*}
\varphi(\mathsf{C}_{\hqfg}(h))=\mathsf{C}_{\hqqfg}(\varphi(h)) =\mathsf{C}_{\hqqfg}(H)=\hqqfg_0 =\bigoplus_{i\ge0}X^i\FF[H]Y^i.
\end{equation*}
In particular, $\varphi(xy)\in \bigoplus_{i\ge0}X^i\FF[H]Y^i$, whence we have $\DEg \varphi(xy)=(i,i)$ for some $i\ge 1$ ($i=0$ would contradict the injectivity of $\varphi$). By symmetry, $\DEg \varphi^{-1}(XY)=(j,j)$, for some $j\ge 1$. Our immediate goal is to show that $i=1=j$.

Write
\begin{equation}\label{Equ36}
\varphi^{-1}(XY)=\sum_{k=0}^{j}x^kp_k(h)y^k, \qquad \text{with $p_j\neq 0$.}  
\end{equation}
Applying $\varphi$ to both sides of~\eqref{Equ36} and computing the lexicographical degree we obtain
\begin{align*}
(1,1)=\DEg XY=j\DEg\varphi(x)+\DEg\varphi(p_j(h))+j\DEg\varphi(y)=j\DEg\varphi(xy)=(ij,ij).
\end{align*}
So indeed $i=1=j$.
	
Since $\DEg\varphi(x), \DEg\varphi(y)\neq(0,0)$, there are just two cases to consider, although we will show that the second one can never hold.

\smallskip

\textbf{Case 1.} $\DEg\varphi(x)=(1,0)$ and $\DEg\varphi(y)=(0,1)$. So we can write
\begin{equation*}
\varphi(x)=Xp_1(H)+\sum\limits_{k\ge0}^{}\alpha_k(H)Y^k  \quad \text{and} \quad  \varphi(y)=p_2(H)Y+\beta(H),
\end{equation*}
for some $p_1,p_2, \alpha_k, \beta\in\FF[H]$ with $k\geq 0$ and $p_1,p_2\neq 0$.
	
Exploiting the relation $H\varphi(x)=\varphi(h)\varphi(x)=\varphi(hx)=\varphi(xf(h))=\varphi(x)f(H)$, we get
\begin{equation*}
HXp_1(H)+\sum\limits_{k\ge0}^{}H\alpha_k(H)Y^k=Xp_1(H)f(H)+\sum\limits_{k\ge0}^{}\alpha_k(H)Y^kf(H).
\end{equation*}
Writing the above in the basis given in Proposition~\ref{P:basic} and equating corresponding terms yields $f'(H)p_1(H)=p_1(H)f(H)$ and $H\alpha_k(H)=\alpha_k(H)(\sigma')^k(f(H))$, for all $k\ge0$. In particular, as $p_1\neq 0$, we deduce that $f'=f$. Also, $\alpha_k\neq0$ for some $k$ implies that $H=(\sigma')^k(f(H))$, which is impossible since $(\sigma')^k(f(H))=f((\sigma')^k(H))$ has degree strictly larger than 1 for any $k\geq 0$. Hence, $\alpha_k=0$ for all $k$. Similarly, using $yh=f(h)y$, we can deduce that $\beta(H)=0$. 

Therefore we have
\begin{equation*}
\varphi(x)=Xp_1(H) \quad \text{and}\quad \varphi(y)=p_2(H)Y. 
\end{equation*}
Now we substitute these expressions into the relation $\varphi(y)\varphi(x)-q\varphi(x)\varphi(y)=g(H)$ and obtain
\begin{align}\label{E:ggprime}
g(H)&=p_2(H)YXp_1(H)-qXp_1(H)p_2(H)Y\notag\\&=q'p_2(H)XYp_1(H)+p_2(H)g'(H)p_1(H)-qXp_1(H)p_2(H)Y\\&=X(q'p_2(f(H))p_1(f(H))-qp_1(H)p_2(H))Y+p_2(H)g'(H)p_1(H).\notag
\end{align}
Comparing coefficients again we get
\begin{equation*}
q'p_2(f(H))p_1(f(H))=qp_1(H)p_2(H).
\end{equation*}
Since $\degg f>1$, this is possible only if $p_1(H),p_2(H)\in\FF^*$, say $p_1(H)=\lambda^{-1}$ and $p_2(H)=\mu^{-1}$, in which case it implies that $q=q'$. Finally, \eqref{E:ggprime} also gives $g(H)=p_2(H)g'(H)p_1(H)=\lambda^{-1}\mu^{-1}g'(H)$. Thus, $\varphi=\rho_{\lambda, \mu}$ is a transformation of Type~\ref{P:7:III}.
	
\smallskip

\textbf{Case 2.}  $\DEg\varphi(x)=(0,1)$ and $\DEg\varphi(y)=(1,0)$. In this case, we can write
\begin{equation*}
\varphi(x)=p(H)Y+\beta(H),
\end{equation*}
for some $p, \beta\in\FF[H]$ with $p\neq 0$.
Using the relation $H\varphi(x)=\varphi(x)f(H)$, we get 
\begin{equation}\label{E:isos:case2}
Hp(H)Y+H\beta(H)=p(H)Yf(H)+\beta(H)f(H).
\end{equation}
Writing~\eqref{E:isos:case2} in normal form and comparing coefficients, we deduce in particular that 
\begin{equation*}
Hp(H)=p(H)\sigma'(f(H))=p(H)f(f'(H)).
\end{equation*}
So $H=f(f'(H))$, which contradicts the fact that $\degg f>1$.
\end{proof}

\section{Automorphisms of quantum generalized Heisenberg algebras}\label{S:automorphisms}

Automorphisms reflect the inner symmetries of an algebra and are thus an extremely useful tool for understanding it intrinsically. 
Having classified the qGHA by isomorphism, it is thence natural to turn to their automorphism groups. The automorphisms groups of generalized Heisenberg algebras were determined in \cite{sL17}. If $\degg f=1$ and $q\neq 0$ then $\hqfg$ is a Noetherian generalized down-up algebra, and its automorphisms have been studied in \cite{CL09}. Therefore, we will continue to assume that $\degg f>1$. 

For each $\lambda\in\FF^*$, let $\phi_{\lambda}$ be the automorphism of $\hqfg$ defined by $\phi_{\lambda}(x)=\lambda x$, $\phi_{\lambda}(y)=\lambda^{-1}y$ and $\phi_{\lambda}(h)=h$. Notice that, in terms of the notation from Proposition~\ref{Prop7}, $\phi_{\lambda}=\rho_{\lambda^{-1}, \lambda}$.

\begin{prop}\label{Prop9}
Assume that $q\neq 0$ and $\degg f>1$. The following hold.
\begin{enumerate}[label=(\alph*)]
\item Any automorphism of $\hqfg$ restricts to an automorphism of $\FF[h]$, and $x$ and $y$ are eigenvectors. \label{Prop9a}
\item If $g\neq 0$ then $\pb{\phi\in \autt{\hqfg}\mid\phi(h)=h}=\pb{\phi_{\lambda}|\lambda\in\FF^* }\simeq\FF^*$, and this is a central subgroup of $\autt{\hqfg}$. \label{Prop9b}
\item If $g\neq 0$ and either $\chara(\FF)=0$ or $\chara(\FF)>\degg f$ then $\pb{\phi\in\autt{\hqfg}\mid\phi(x)=x }$ is a finite cyclic subgroup whose order divides $(\degg f)-1$.\label{Prop9c}
\end{enumerate}
\end{prop}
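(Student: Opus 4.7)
For part (a), the plan is to apply Theorem~\ref{T:iso} directly. An element $\varphi\in\autt{\hqfg}$ is an isomorphism $\hqfg\to\hqfg$, so by that theorem it factors as a composition of transformations of Types~\ref{P:7:I}, \ref{P:7:II} and~\ref{P:7:III} of Proposition~\ref{Prop7}. Each such transformation acts affinely on $h$ (with Type~\ref{P:7:III} fixing $h$) and either fixes $x,y$ (Types~\ref{P:7:I}, \ref{P:7:II}) or rescales them (Type~\ref{P:7:III}). Consequently there exist $a,\lambda,\mu\in\FF^*$ and $b\in\FF$ with $\varphi(h)=ah+b$, $\varphi(x)=\lambda x$ and $\varphi(y)=\mu y$, which yields both claims of (a).

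For part (b), $\varphi(h)=h$ forces $a=1$ and $b=0$ in the form above. Applying $\varphi$ to $yx-qxy=g(h)$ then gives $\lambda\mu\,g(h)=g(h)$, so $\mu=\lambda^{-1}$ because $g\neq 0$, i.e.\ $\varphi=\phi_\lambda$. Conversely each $\phi_\lambda$ fixes $h$, and $\lambda\mapsto\phi_\lambda$ is visibly a group isomorphism $\FF^*\to\{\phi_\lambda\mid\lambda\in\FF^*\}$. Centrality is a short direct check: writing any $\psi\in\autt{\hqfg}$ as $\psi(h)=a'h+b'$, $\psi(x)=\alpha x$, $\psi(y)=\beta y$ via part (a), scalar commutation gives $\phi_\lambda\psi=\psi\phi_\lambda$ on each of the three generators.

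For part (c), the hypothesis $\varphi(x)=x$ means $\lambda=1$. Applying $\varphi$ to $hx=xf(h)$ gives the polynomial identity $f(ah+b)=af(h)+b$; writing $f(h)=\sum_{i=0}^n c_i h^i$ with $n=\degg f\geq 2$ and $c_n\neq 0$, the $h^n$-coefficient forces $a^{n-1}=1$, and the $h^{n-1}$-coefficient (after substituting $a^{n-1}=1$) reduces to $nc_nb=(a-1)c_{n-1}$. The characteristic hypothesis makes $nc_n\in\FF^*$, so $b$ is uniquely determined by $a$. The relation $yx-qxy=g(h)$ analogously gives $\mu\,g(h)=g(ah+b)$, determining $\mu$ from $(a,b)$ since $g\neq 0$. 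Because $(\varphi_2\varphi_1)(h)=a_1a_2\,h+(a_1b_2+b_1)$, the assignment $\varphi\mapsto a$ is a group homomorphism to $\FF^*$, and the uniqueness of $b$ and $\mu$ in terms of $a$ makes it injective on $\{\varphi\mid\varphi(x)=x\}$. Its image lies in the $(n-1)$-th roots of unity in $\FF^*$, a finite subgroup of $\FF^*$ and therefore cyclic of order dividing $n-1$. The main obstacle is precisely this $h^{n-1}$-coefficient step: without the characteristic hypothesis, $nc_n$ can vanish, $b$ need not be determined by $a$, and the cyclicity conclusion fails---consistent with the introduction's warning that non-abelian automorphism groups appear in small characteristic.
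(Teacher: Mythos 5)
Your argument is essentially the paper's, and parts (b) and (c) are correct as written; in (c) your direct comparison of the $h^n$- and $h^{n-1}$-coefficients of $f(ah+b)=af(h)+b$ is a clean substitute for the paper's device of differentiating $n-1$ times, and it invokes the characteristic hypothesis at exactly the same point (to guarantee $nc_n\neq 0$). The one step that needs repair is the justification of part (a). The \emph{statement} of Theorem~\ref{T:iso} is an assertion about when the parameter pairs $(f,g)$ and $(f',g')$ are related by transformations of Types~\ref{P:7:I}--\ref{P:7:III}; applied to an automorphism (where $q=q'$, $f=f'$ and $g=g'$) it is vacuously satisfied and says nothing about the form of the map itself, so it does not let you ``factor'' an arbitrary automorphism as a composition of those three types. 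What you actually need is extracted from the \emph{proof} of Theorem~\ref{T:iso}: the lexicographic-degree argument there shows $\varphi(h)=aH+b$ for any isomorphism, and Case~1 of that proof (after composing with maps of Types~\ref{P:7:I} and~\ref{P:7:II}, which fix $x$ and $y$) shows that $\varphi(x)$ and $\varphi(y)$ are scalar multiples of $X$ and $Y$. The paper is careful on this point: it cites ``the proof of Theorem~\ref{T:iso}'' only for the fact that $\phi(h)=ah+b$, and then gives an independent argument for the eigenvector property, using the $\ZZ$-grading \eqref{E:basic:weight_space_dec} to show that $\phi(x)\in\hqfg_1=x\,\hqfg_0$ and $\phi(y)\in\hqfg_{-1}$, and finally invoking that the group of units of $\hqfg$ is $\FF^*$ to force the $\hqfg_0$-factors to be scalars. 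Either route works, but as written your appeal to the theorem's statement is not a valid inference; rephrase it as an appeal to the theorem's proof, or reproduce the grading argument.
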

\begin{proof}
Let $\phi$ be an automorphism of $\hqfg$. Since $q\neq 0$ and $\degg f>1$, the proof of Theorem~\ref{T:iso} applies and shows that 
$\phi$ restricts to an automorphism of $\FF[h]$, say $\phi(h)=ah+b$ for some $a,b\in\FF$ with $a\neq0$. Consider the grading defined in~\eqref{E:basic:weight_space_dec}. For $k\geq0$, let $\xi_k\in\hqfg_k=x^k\hqfg_0$, say $\xi_k=x^k\theta$ with $\theta\in\hqfg_0$. We have $p(h)\xi_k=p(h)x^k\theta=x^k\sigma^k(p(h))\theta=\xi_k\sigma^k(p(h))$, for all $p(h)\in\FF[h]$. Similarly, for $k\leq 0$, $\xi_kp(h)=\sigma^{-k}(p(h))\xi_k$.

Write $\phi(x)=\sum_{k\in\ZZ}\xi_k$, with $\xi_k\in\hqfg_k$. Then 
\begin{equation}\label{E:autos:prelim:1}
\phi(h)\phi(x)= \sum_{k\in\ZZ}\phi(h)\xi_k=\sum_{k\geq 0}\xi_k\sigma^k(\phi(h))+\sum_{k<0}\phi(h)\xi_k.
\end{equation}
On the other hand, 
\begin{equation}\label{E:autos:prelim:2}
\phi(h)\phi(x)= \phi(hx)=\phi(x)\phi(\sigma(h))=\sum_{k\geq 0}\xi_k\phi(\sigma(h))+\sum_{k<0}\sigma^{-k}(\phi(\sigma(h)))\xi_k.
\end{equation}
Equating homogeneous terms of the same degree in \eqref{E:autos:prelim:1} and \eqref{E:autos:prelim:2}, and using the fact that $\hqfg$ is a domain, we deduce that $\sigma^k(\phi(h))=\phi(\sigma(h))$ for all $k\geq 0$ such that $\xi_k\neq 0$ and $\phi(h)=\sigma^{-k}(\phi(\sigma(h)))$ for all $k< 0$ such that $\xi_k\neq 0$. Note that $\degg\phi(h)=1$, $\degg\phi(\sigma(h))=\degg f$, $\degg \sigma^k(\phi(h))=(\degg f)^k$ and $\degg \sigma^{-k}(\phi(\sigma(h)))=(\degg f)^{1-k}$. So since $\degg f>1$, the only possibility is that $\phi(x)$ is homogeneous of degree $1$. Similarly, $\phi(y)$ is homogeneous of degree $-1$. This shows that the automorphism $\phi$ is a homogeneous map with respect to the grading~\eqref{E:basic:weight_space_dec}.

Thus, there exist $\theta_x, \theta_y\in\hqfg_0$ such that $\phi(x)=x\theta_x$ and $\phi(y)=\theta_y y$. Applying the same reasoning to $\phi^{-1}$ and noting that the group of units of $\hqfg$ is $\FF^*$, we conclude that $\theta_x, \theta_y\in\FF^*$, which proves~\ref{Prop9a}.
	
For~\ref{Prop9b}, assume that $\phi(h)=h$ and $g\neq 0$. By \ref{Prop9a} there exist $\lambda,\mu\in\FF^*$ such that $\phi(x)=\lambda x$ and $\phi(y)=\mu y$. Then, applying $\phi$ to one of the defining relations, we get
\begin{align*}
\lambda\mu g(h)=\lambda\mu(yx-qxy)=\phi(yx-qxy)=\phi(g(h))=g(h),
\end{align*}
which yields $\lambda\mu=1$ and hence $\phi=\phi_{\lambda}$. This proves the equality in \ref{Prop9b}, and the isomorphism $\{ \phi_{\lambda}\mid\lambda\in\FF^* \}\cong\FF^*$ is clear as $\phi_{\lambda}\circ\phi_{\mu}=\phi_{\lambda\mu}$ for all $\lambda,\mu\in\FF^*$.
	
Next we show that the subgroup $\{\phi_{\lambda}\mid \lambda\in\FF^*  \}$ is central in $\autt{\hqfg}$. Let $\lambda\in\FF^*$, and suppose $\psi\in\autt{\hqfg}$ is arbitrary. By \ref{Prop9a} we know that $\psi(h)\in\FF[h]$, thus $\phi_{\lambda}\circ\psi(h)=\psi\circ\phi_{\lambda}(h)$. As $x$ and $y$ are eigenvectors for $\psi$, we see that $\phi_{\lambda}\circ\psi$ and $\psi\circ\phi_{\lambda}$ also agree on these generators, so we can conclude that $\phi_{\lambda}\circ\psi=\psi\circ\phi_{\lambda}$.
	
To prove part \ref{Prop9c}, suppose that $\phi\in\autt{\hqfg}$ and $\phi(x)=x$. We know already that $\phi(h)=ah+b$ and $\phi(y)=cy$, for some $a,b,c\in\FF$ with $a,c\neq0$. Then $xf(ah+b)=xf(\phi(h))=\phi(h)x=(ah+b)x=x(af(h)+b)$, and we obtain
\begin{equation}\label{Equ39}
f(ah+b)=af(h)+b.
\end{equation}
On the other hand, we have
\begin{align*}
cg(h)=c(yx-qxy)=\phi(yx-qxy)=\phi(g(h))=g(ah+b).
\end{align*}
Since $g\neq 0$, it follows that $c=a^{\degg g}$.

Now write $f(h)=\sum\limits_{k=0}^{n}a_kh^k$, where $n=\degg f$ and all $a_k\in\FF$. Applying the derivation operator $\frac{d}{dh}$ to (\ref{Equ39}) $n-1$ times yields $a^{n-1}f^{(n-1)}(ah+b)=af^{(n-1)}(h)$. As $f^{(n-1)}(h)=(n-1)!(na_nh+a_{n-1})$ and using the hypothesis on $\chara(\FF)$, we obtain
\begin{equation*}
na_na^nh+na^{n-1}a_nb+a^{n-1}a_{n-1}=ana_nh+aa_{n-1}
\end{equation*}
and conclude that  
\begin{equation}\label{Equ40}
a^{n-1}=1, \quad \text{and}\quad b=\frac{(a-1)a_{n-1}}{na_n}.
\end{equation}
In particular, since $f$ is fixed, $b$ is determined by $a$.

Let $U_{n-1}=\{\xi\in \FF^*\mid \xi^{n-1}=1  \}$. Then $U_{n-1}$ is a cyclic group whose order divides $n-1$. Define a map $\Psi:\{\phi\in \autt{\hqfg}\mid \phi(x)=x \}\longrightarrow U_{n-1}$ by $\Psi(\phi)=a$,
where $\phi(h)=ah+b$. Then $\Psi$ is well defined by \eqref{Equ40} and it is a group homomorphism. If $\Psi(\phi)=1$ for some $\phi\in\autt{\hqfg}$ with $\phi(x)=x$, then the above shows that $\phi(y)=a^{\degg g}y=y$ and $\phi(h)=h+b=h$. So $\phi$ is the identity on $\hqfg$. This shows that $\Psi$ is an injective group homomorphism and thus $\{\phi\in \autt{\hqfg}\mid \phi(x)=x \}$ is isomorphic to a subgroup of $U_{n-1};$ hence it is a finite cyclic group whose order divides $n-1$.
\end{proof}

We can now describe the structure of the automorphism group of $\hqfg$. We will treat the case $g=0$ separately as this case has a slightly different description.

\begin{thm}\label{Thm2}
Assume that $q\neq0$, $\degg f>1$ and $g\neq0$. Then we have the internal direct product decomposition
\begin{equation}\label{Equ41}
\autt{\hqfg}=\pb{\phi_{\lambda}\mid\lambda\in\FF^*} \dot{\times}\pb{\phi\in\autt{\hqfg}\mid\phi(x)=x}
\end{equation}
and $\pb{\phi_\lambda\mid \lambda\in\FF^*}\cong\FF^*$ is central in $\autt{\hqfg}$.

Moreover, if either $\chara(\FF)=0$ or $\chara(\FF)>\degg f$ then $\pb{\phi\in\autt{\hqfg} \mid \phi(x)=x}$ is a finite cyclic group whose order divides $(\degg f)-1$ and thus in this case $\autt{\hqfg}$ is an abelian group.
\end{thm}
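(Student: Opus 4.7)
The plan is to use Proposition~\ref{Prop9} as the engine and reduce the theorem to a formal verification of the internal direct product conditions. The crucial input is part~\ref{Prop9a}, which shows that every $\phi\in\autt{\hqfg}$ scales $x$, say $\phi(x)=\lambda x$ for some $\lambda\in\FF^*$. This is precisely what permits me to strip off a factor $\phi_\lambda$ from $\phi$ and land in the subgroup fixing $x$.

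Set $H=\pb{\phi_\mu\mid\mu\in\FF^*}$ and $K=\pb{\phi\in\autt{\hqfg}\mid \phi(x)=x}$. First I would show that $\autt{\hqfg}=HK$: given $\phi\in\autt{\hqfg}$ with $\phi(x)=\lambda x$, put $\psi=\phi_{\lambda^{-1}}\circ\phi$; a one-line computation using $\phi_{\lambda^{-1}}(x)=\lambda^{-1}x$ gives $\psi(x)=x$, so $\psi\in K$ and $\phi=\phi_\lambda\circ\psi\in HK$. Next, $H\cap K=\pb{1_{\hqfg}}$: if $\phi_\lambda\in K$ then $\lambda x=x$, forcing $\lambda=1$. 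Finally, Proposition~\ref{Prop9}\ref{Prop9b} records that $H$ is central in $\autt{\hqfg}$, so in particular its elements commute with those of $K$. These three facts together deliver the internal direct product decomposition~\eqref{Equ41}, while the isomorphism $H\cong\FF^*$ and its centrality are contained in Proposition~\ref{Prop9}\ref{Prop9b}.

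For the ``moreover'' part, the hypothesis on $\chara(\FF)$ allows me to invoke Proposition~\ref{Prop9}\ref{Prop9c} directly, which yields that $K$ is a finite cyclic group whose order divides $(\degg f)-1$. Since both $H\cong\FF^*$ and $K$ are abelian and, by the central direct product structure just established, commute element-wise, $\autt{\hqfg}$ is abelian.

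As for the main obstacle: there is essentially none at this stage, since all the substantive work has been absorbed into Proposition~\ref{Prop9}. The one conceptual point worth flagging is that to conclude an internal direct product it is not necessary to check normality of $K$ separately --- centrality of $H$, together with the trivial intersection $H\cap K=\pb{1_{\hqfg}}$ and the generation property $\autt{\hqfg}=HK$, already suffice, and these are exactly the ingredients supplied by Proposition~\ref{Prop9}.
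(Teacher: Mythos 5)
Your proposal is correct and follows essentially the same route as the paper: decompose an arbitrary automorphism as $\phi=\phi_\lambda\circ\psi$ with $\psi(x)=x$ using Proposition~\ref{Prop9}\ref{Prop9a}, check trivial intersection, and cite parts \ref{Prop9b} and \ref{Prop9c} for centrality and the cyclic-group claim. The paper likewise deduces normality of the $x$-fixing subgroup from the centrality of $\pb{\phi_\lambda}$ together with the factorization $\autt{\hqfg}=AB$, exactly as you observe.
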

\begin{proof}
Let $A=\{\phi_\lambda\mid \lambda\in\FF^*\}$ and $B=\{\phi\in\autt{\hqfg} \mid \phi(x)=x\}$. In order to prove the direct product decomposition \eqref{Equ41} we need to show that $\autt{\hqfg}=AB$, $A\cap B=\{1\}$ and that $A$, $B$ are normal subgroups of $\autt{\hqfg}$.
	
Let $\phi$ be an automorphism of $\hqfg$. By Proposition~\ref{Prop9}, there exist $a\in\FF^*$, $b\in\FF$ and $\lambda,\mu\in\FF^*$ such that $\phi(h)=ah+b$, $\phi(x)=\lambda x$ and $\phi(y)=\mu y$. Then $\phi_{\lambda^{-1}}\circ\phi(x)=x$. Write $\psi=\phi_{\lambda^{-1}}\circ\phi \in B$. Then we have $\phi_\lambda\psi=\phi$, so $\autt{\hqfg}=AB$. We have already seen in Proposition~\ref{Prop9} that $A$ is central in $\hqfg$; given that $\autt{\hqfg}=AB$, that implies that $B$ is also normal in $\hqfg$. Finally, the only automorphism in $A$ which fixes $x$ is $\phi_1=1_{\hqfg}$, by definition of $\phi_\lambda$, so the two subgroups have trivial intersection.
	
The last statement follows by Proposition~\ref{Prop9}\ref{Prop9c}, as $\autt{\hqfg}$ is then the product of the abelian subgroups $A$ and $B$.
\end{proof}

\begin{remark}\label{R:autos:param:xfix}
By the proof of Proposition~\ref{Prop9}, if $q\neq 0$ and $\degg f>1$ then, as a set, $\pb{\phi\in\autt{\hqfg} \mid \phi(x)=x}$ can be identified with 
\begin{equation*}
\pb{ (a, b)\in\FF^*\times\FF \mid f(ah+b)=af(h)+b \ \text{and} \ g(ah+b)=a^{\degg g}g(h)}, 
\end{equation*}
via the map $\phi\mapsto (a, b)$, where $\phi(h)=ah+b$. This mapping is well defined and one-to-one because we have shown that in this situation $\phi(y)$ depends only on $a$. Conversely, given $(a, b)\in\FF^*\times\FF$ such that $f(ah+b)=af(h)+b$ and $g(ah+b)=a^{\degg g}g(h)$, it is easy to see that there is an automorphism of $\hqfg$ sending $h$ to $ah+b$, $x$ to $x$ and $y$ to $a^{\degg g} y$. This observation reduces the computation of the subgroup $\pb{\phi\in\autt{\hqfg} \mid \phi(x)=x}$ to an arithmetical question involving just the ground field $\FF$ and the polynomials $f$ and $g$, but not the parameter $q$.
\end{remark}

Now we settle the case $g=0$.

\begin{thm}\label{Thm3}
Assume that $q\neq0$, $\degg f>1$ and $g=0$. Then we have the internal direct product decomposition
\begin{equation*}
\autt{\hqfg[f,0]}=\pb{\phi\in\autt{\hqfg[f,0]}\mid \phi(x)=x, \phi(y)=y} \dot{\times} \pb{\phi\in\autt{\hqfg[f,0]} \mid \phi(h)=h}
\end{equation*}
and $\pb{\phi\in\autt{\hqfg[f,0]} \mid \phi(h)=h}\simeq \FF^*\times\FF^*$.

If either $\chara(\FF)=0$ or $\chara(\FF)>\degg f$ then $\pb{\phi\in\autt{\hqfg} \mid \phi(x)=x, \phi(y)=y}$ is a finite cyclic group whose order divides $(\degg f)-1$ and thus in this case $\autt{\hqfg[f,0]}$ is an abelian group.
\end{thm}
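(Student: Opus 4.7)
The plan is to parametrize $\autt{\hqfg[f,0]}$ using Proposition~\ref{Prop9}\ref{Prop9a} and then exploit the fact that when $g=0$ the relation $yx-qxy=0$ imposes no restriction on the scalars appearing in $\phi(x)$ and $\phi(y)$. First, by Proposition~\ref{Prop9}\ref{Prop9a}, any $\phi\in\autt{\hqfg[f,0]}$ is of the form $\phi(h)=ah+b$, $\phi(x)=\lambda x$, $\phi(y)=\mu y$ for some $(a, b, \lambda, \mu)\in\FF^*\times\FF\times\FF^*\times\FF^*$. Applying $\phi$ to $hx=xf(h)$ yields the constraint $f(ah+b)=af(h)+b$, and the same equation comes out of $yh=f(h)y$. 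The third defining relation becomes $\lambda\mu(yx-qxy)=0$, which holds automatically. So $\autt{\hqfg[f,0]}$ is in bijection with the set of quadruples $(a, b, \lambda, \mu)$ subject only to the single functional equation on $f$, and in particular $\lambda$ and $\mu$ range independently over $\FF^*$.

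Set $A=\pb{\phi\mid \phi(x)=x,\,\phi(y)=y}$ (i.e.\ $\lambda=\mu=1$) and $B=\pb{\phi\mid \phi(h)=h}$ (i.e.\ $a=1$, $b=0$). From the parametrization above, $B$ is visibly isomorphic to $\FF^*\times\FF^*$ via $\phi\mapsto(\lambda,\mu)$. The intersection $A\cap B$ is trivial since a map in it must fix all three generators. Given any $\phi$ with parameters $(a, b, \lambda, \mu)$, let $\phi_B\in B$ be the automorphism with parameters $(1, 0, \lambda, \mu)$; then $\phi_B^{-1}\circ\phi$ fixes $x$ and $y$, hence lies in $A$, which shows $\autt{\hqfg[f,0]}=BA$. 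Finally, both subgroups are normal: if $\psi$ is any automorphism, then $\psi\phi_B\psi^{-1}(h)=\psi(\phi_B(\psi^{-1}(h)))=\psi(\psi^{-1}(h))=h$ because $\psi^{-1}(h)\in\FF[h]$ and $\phi_B$ fixes $\FF[h]$ pointwise; similarly, for $\phi_A\in A$ one computes $\psi\phi_A\psi^{-1}(x)=x$ and $\psi\phi_A\psi^{-1}(y)=y$ directly from the fact that $\psi^{-1}(x)\in\FF^*x$ and $\psi^{-1}(y)\in\FF^*y$. This gives the internal direct product decomposition.

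For the final assertion, I will repeat, almost verbatim, the argument of Proposition~\ref{Prop9}\ref{Prop9c}, but now applied to $\phi\in A$. Writing $n=\degg f$ and $f(h)=\sum_{k=0}^n a_k h^k$, applying $d/dh$ to $f(ah+b)=af(h)+b$ a total of $n-1$ times gives $a^{n-1}f^{(n-1)}(ah+b)=af^{(n-1)}(h)$, and under the hypothesis $\chara(\FF)=0$ or $\chara(\FF)>n$ this forces $a^{n-1}=1$ and $b=(a-1)a_{n-1}/(na_n)$, so $b$ is determined by $a$. Hence $A$ embeds into the cyclic group $U_{n-1}=\pb{\xi\in\FF^*\mid\xi^{n-1}=1}$ via $\phi\mapsto a$, so $A$ is finite cyclic with order dividing $n-1$. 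Since $B\cong\FF^*\times\FF^*$ is abelian and $A$ is cyclic, their direct product is abelian.

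The main obstacle is not computational but conceptual: one must resist the temptation to carry over the decomposition of Theorem~\ref{Thm2} in which the ``scaling'' subgroup $\pb{\phi_\lambda}$ was cut out using the constraint $\lambda\mu=1$. That constraint disappears when $g=0$, so the natural complement to $A$ is no longer one-dimensional but the full two-dimensional torus $B=\pb{\phi\mid\phi(h)=h}$; once one identifies $B$ correctly the rest of the proof becomes a routine parameter check.
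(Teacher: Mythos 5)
Your proposal is correct and follows essentially the same route as the paper: parametrize every automorphism via Proposition~\ref{Prop9}\ref{Prop9a} as $(a,b,\lambda,\mu)$, observe that $g=0$ removes any constraint tying $\lambda$ to $\mu$ so that the $h$-fixing subgroup is the full torus $\FF^*\times\FF^*$, and then run the derivative argument of Proposition~\ref{Prop9}\ref{Prop9c} on the $(x,y)$-fixing subgroup. The only cosmetic difference is that you verify normality of both factors by direct conjugation, whereas the paper proves the $h$-fixing subgroup is central and imports the rest of the decomposition argument from Theorem~\ref{Thm2}.
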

\begin{proof}
Let $A=\{\psi\in\autt{\hqfg[f,0]} \mid \psi(h)=h \} $ and $B=\{\phi\in\autt{\hqfg[f,0]}\mid \phi(x)=x, \phi(y)=y \}$. We will describe $A$ first. 
Given $\psi\in A$, we know by Proposition~\ref{Prop9} that there exist $\lambda,\mu\in\FF^*$ so that $\psi(x)=\lambda x$ and $\psi(y)=\mu y$. Denote such an automorphism by $\psi_{\lambda,\mu}$ and recall that $\psi_{\lambda,\mu}(h)=h$ because $\psi_{\lambda,\mu}\in A$. Conversely, thanks to the defining relation $yx-qxy=0$, it is immediate to see that $\psi_{\lambda,\mu}$ is a well-defined automorphism of $\hqfg[f,0]$, for arbitrary $\lambda,\mu\in\FF^*$. Hence, $A=\pb{\psi_{\lambda,\mu}\mid \lambda,\mu\in\FF^*}$ and $A\simeq \FF^*\times\FF^*$, as groups.

Next, we prove that $A$ is central. Let $\phi\in\autt{\hqfg[f,0]}$ and $\lambda,\mu\in\FF^*$. Then $\phi\circ\psi_{\lambda, \mu}(h)=\phi(h)=\psi_{\lambda, \mu}\circ\phi (h)$ because $\psi_{\lambda, \mu}$ is the identity on $\FF[h]$. Since $x, y$ are eigenvectors for both $\phi$ and $\psi_{\lambda, \mu}$, it also follows that $\phi\circ\psi_{\lambda, \mu}$ and $\psi_{\lambda, \mu}\circ\phi$ agree on the generators $x, y$ and thus $\phi$ and $\psi_{\lambda, \mu}$ commute. The direct product decomposition then follows just as in the proof of Theorem~\ref{Thm2}.

Finally, as in Remark~\ref{R:autos:param:xfix}, the subgroup $B$ of automorphisms of $\hqfg[f,0]$ which fix $x$ and $y$ can be identified with the set $\pb{ (a, b)\in\FF^*\times\FF \mid f(ah+b)=af(h)+b}$. In case $\chara(\FF)=0$ or $\chara(\FF)>\degg f$, the same methods used in the proof of Proposition~\ref{Prop9}\ref{Prop9c} show that $B$ is cyclic and its order divides $(\degg f)-1$.
\end{proof}

The following example will show that, without the additional hypothesis on the characteristic of the base field $\FF$ imposed in Theorems~\ref{Thm2} and~\ref{Thm3}, the group $\autt{\hqfg}$ may be non-abelian.

\begin{exam} 
Suppose that $\chara(\FF)=p>2$, set $f(h)=h^p$ and let $g(h)=0$ or $g(h)=h^p-h$. Then the following define automorphisms of $\hqfg$:
\begin{eqnarray}
\phi(h)=h+1,\qquad \phi(x)=x,\qquad \phi(y)=y;\\
\psi(h)=2h,\qquad \psi(x)=x,\qquad \psi(y)=2y. 
\end{eqnarray}
This is because $f(h+1)=(h+1)^p=h^p+1=f(h)+1$, $f(2h)=2^ph^p=2h^p=2f(h)$, $g(h+1)=(h+1)^p-(h+1)=g(h)$ and $g(2h)=2^ph^p-2h=2^pg(h)$. However, $\phi\circ\psi(h)=2h+2\neq 2h+1=\psi\circ\phi(h)$, so the group $\autt{\hqfg}$ is not abelian. 
\end{exam}

\newcommand{\germ}{\mathfrak}

\bibliographystyle{plain}
\def\cprime{$'$}

\end{document}